
\documentclass[a4paper,12pt]{article}

\usepackage{amsmath,amsfonts,amsthm}
\usepackage{mathrsfs}
\usepackage{bbold}
\usepackage{graphicx}

\newcommand{\1}{{\mathbb{1}}}
\newcommand{\Limsup}{\mathop{\overline{\lim}}\limits}
\newcommand{\argmax}{\mathop{\rm argmax}\limits}
\newcommand{\Ex}{\mathop{\mathbf{\kern 0pt E}}\nolimits}
\newcommand{\Pb}{\mathop{\mathbf{\kern 0pt P}}\nolimits}
\newcommand{\PP}{\mathop{\mathbb{\kern 0pt P}}\nolimits}
\newcommand{\EE}{\mathop{\mathbb{\kern 0pt E}}\nolimits}

\newtheorem{theorem}{Theorem}
\newtheorem{lemma}{Lemma}
\newtheorem{proposition}{Proposition}

\begin{document}

\title{On Hypothesis Testing for Poisson Processes.  Singular Cases}

\author{S. \textsc{Dachian}\\
{\small Universit\'e Blaise Pascal, Clermont-Ferrand, France}\\[8pt]
Yu. A. \textsc{Kutoyants}\\
{\small Universit\'e du Maine, Le Mans, France and}\\
{\small Higher School of Economics, Moscow, Russia}\\[8pt]
L. \textsc{Yang}\footnote{Corresponding author. E-mail address:
  Lin\_Yang.Etu@univ-lemans.fr}\\
{\small Universit\'e du Maine, Le Mans, France}}

\date{}

\maketitle

\begin{abstract}
We consider the problem of hypothesis testing in the situation where the first
hypothesis is simple and the second one is local one-sided composite. We
describe the choice of the thresholds and the power functions of different
tests when the intensity function of the observed inhomogeneous Poisson
process has two different types of singularity: cusp and discontinuity. The
asymptotic results are illustrated by numerical simulations.
\end{abstract}

MSC 2010 Classification: 62M02, 62F03, 62F05.

\medskip

\textsl{Key words:} Hypothesis testing, inhomogeneous Poisson processes,
asymptotic theory, composite alternatives, singular situations.

\section{Introduction}

This is the second part of the study devoted to hypothesis testing problems in
the case when the observations are inhomogeneous Poisson processes. The first
part was concerned with the regular (smooth) case \cite{DKYR}, while this
second part deals with non regular (singular) situations.  We suppose that the
intensity function of the observed inhomogeneous Poisson process depends on
the unknown parameter $\vartheta $ in a non regular way (for example, the
Fisher information is infinite).  The basic hypothesis is always simple
($\vartheta =\vartheta _1$) and the alternative is one-sided composite
($\vartheta >\vartheta _1$).  In the first part we studied the asymptotic
behavior of the Score Function test (SFT), of the General Likelihood Ratio
test (GLRT), of the Wald test (WT) and of two Bayes tests (BT1 and BT2). It
was shown that the tests SFT, GLRT and WT are locally asymptotically uniformly
most powerful.  In the present work we study the asymptotic behavior of the
GLRT, WT, BT1 and BT2 in two non regular situations. More precisely, we study
the tests when the intensity functions has a cusp-type singularity or a
jump-type singularity. In both cases the Fisher information is infinite. The
local alternatives are obtained by the reparametrization $\vartheta=\vartheta
_1+u\varphi _n$, $u> 0$. The rate of convergence $\varphi _n\rightarrow 0$
depends on the type of singularity. In the cusp case $\varphi _n\sim
n^{-\frac{1}{2\kappa +1}}$, where $\kappa$ is the order of the cusp, and in
the discontinuous case $\varphi _n\sim n^{-1}$. Our goal is to describe the
choice of the thresholds and the behavior of the power functions as
$n\rightarrow \infty $. The important difference between regular and singular
cases is the absence of the criteria of optimality. This leads to a situation
when the comparison of the power functions can be only done numerically. That
is why we present the results of numerical simulations of the limit power
functions and the comparison of them with the power functions with small and
large volumes of observations (small and large $n$).

Recall that $X=\left(X_t,\ t\geq 0 \right)$ is an inhomogeneous Poisson
process with intensity function $\lambda \left(t\right)$, $t\geq 0$, if
$X_0=0$ and the increments of $X$ on disjoint intervals are independent and
distributed according to the Poisson law
$$
\Pb\left\{X_t-X_s=k\right\}=\frac{\left(\int_s^t \lambda
\left(t\right) {\rm d}t\right)^k }{k!}\exp\left\{-\int_s^t \lambda
\left(t\right) {\rm d}t\right\}.
$$
We suppose that the intensity function depends on some one-dimensional
parameter, that is, $\lambda \left(t\right)=\lambda \left(\vartheta
,t\right)$.  The basic hypothesis is simple: $\vartheta =\vartheta _1$, while
the alternative is one-sided composite: $\vartheta >\vartheta _1$.

The hypothesis testing problems for inhomogeneous Poisson processes were
studied by many authors (see, for example, \cite{LW87}, \cite{FP11},
\cite{DKYR} and the references therein).

\section{Preliminaries}

We consider the model of $n$ independent observations of an inhomogene\-ous
Poisson process: $X^n=\left(X_1,\ldots,X_n\right)$, where
$X_j=\left(X_j\left(t\right), 0\leq t\leq \tau \right)$, $j=1,\ldots,n$, are
Poisson processes with
$$
\Ex_\vartheta X_j\left(t\right)=\Lambda \left(\vartheta
,t\right)=\int_{0}^{t}\lambda \left(\vartheta ,s\right)\;{\rm d}s.
$$
We use here the same notations as in \cite{DKYR}.  In particular, $\vartheta $
is one-dimensional parameter and $\Ex_\vartheta $ is the mathematical
expectation in the case when the true value is $\vartheta$.  The intensity
function is supposed to be separated from zero on $\left[0,\tau \right]$. The
measures corresponding to Poisson processes with different values of
$\vartheta $ are equivalent and the likelihood function is defined by the
equality
$$
L(\vartheta ,X^n)=\exp\left\{\sum_{j=1}^{n}\int_{0}^{\tau }\ln\lambda
\left(\vartheta ,t\right){\rm d}X_j\left(t\right)-n\int_{0}^{\tau
}\left[\lambda \left(\vartheta ,t\right)-1 \right]{\rm d}t\right\}.
$$

In non-regular situations we do not have a LAUMP test, and it is interesting
to compare the power functions of different tests with the power function of
the Neyman-Pearson test (N-PT). Let us recall the definition of the N-PT.
Suppose that we have two simple hypotheses $\mathscr{ H}_1:\vartheta
=\vartheta_1 $ and $\mathscr{ H}_2:\vartheta =\vartheta_2$ and our goal is to
construct a test $\bar\psi _n\left(X^n\right)$ of size $\varepsilon $, that
is, a test with given probability of the error of the first kind
$\Ex_{\vartheta_1}\bar\psi _n\left(X^n\right)=\varepsilon $. As usually, the
test $\bar\psi _n\left(X^n\right)$ is the probability to reject the hypothesis
$\mathscr{ H}_1 $ and, of course, to accept the hypothesis $\mathscr{ H}_2$.

Let us denote the likelihood ratio statistic as
$$
L\left(\vartheta_2 ,\vartheta
_1,X^n\right)=L\left(\vartheta_2,X^n\right)/L\left(\vartheta_1,X^n\right).
$$
Then, by the Neyman-Pearson Lemma \cite{LR}, the N-PT is
$$
\psi_n^*\left(X^n\right)=\begin{cases}
1,& \, {\rm if}\,\, \quad L\left(\vartheta_2 ,\vartheta
_1,X^n\right)>d_\varepsilon ,\\
q_\varepsilon, & \,{\rm if}\, \quad L\left(\vartheta_2 ,\vartheta
_1,X^n\right)=d_\varepsilon ,\\
0,& \, {\rm if} \,\quad L\left(\vartheta_2 ,\vartheta _1,X^n\right)
<d_\varepsilon,
\end{cases}
$$
where the constants $d_\varepsilon $ and $q_\varepsilon $ are solutions of the
equation
$$
\Pb_{\vartheta _1}\left(L\left(\vartheta_2 ,\vartheta
_1,X^n\right)>d_\varepsilon \right)+q_\varepsilon \Pb_{\vartheta
  _1}\left(L\left(\vartheta_2 ,\vartheta _1,X^n\right)=d_\varepsilon \right)
=\varepsilon.
$$

In this work we consider the construction of the tests in the following
hypothesis testing problem
\begin{equation}
\label{ht}
\begin{aligned}
\mathscr{ H}_1\quad &:\qquad \vartheta =\vartheta _1,\\
\mathscr{ H}_2\quad &:\qquad \vartheta >\vartheta _1,
\end{aligned}
\end{equation}
that is, we have a simple hypothesis against one-sided composite alternative.

The log likelihood ratio function can be written as follows:
$$
\ln L(\vartheta ,\vartheta
_1,X^n)=\sum_{j=1}^{n}\int_{0}^{\tau }\ln\frac{\lambda
\left(\vartheta ,t\right)}{\lambda \left(\vartheta_1
  ,t\right)}{\rm d}X_j\left(t\right)-n\int_{0}^{\tau }\left[\lambda
  \left(\vartheta ,t\right)-\lambda \left(\vartheta_1
  ,t\right) \right]{\rm d}t.
$$

The power function of a test $\bar \psi_n\left(X^n\right)$ is $\beta
\left(\bar \psi _n,\vartheta\right) =\Ex_\vartheta \bar
\psi_n\left(X^n\right)$, $\vartheta >\vartheta _1$.

We denote $\mathcal{ K}_\varepsilon$ the class of tests $\bar \psi _n$ of
asymptotic size $\varepsilon$:
$$
\mathcal{ K}_\varepsilon =\left\{\bar \psi _n\quad :\quad \lim_{n\rightarrow
  \infty }\Ex_{\vartheta _1}\bar\psi _n\left(X^n\right)=\varepsilon \right\}.
$$
In this work we study several tests which belong to the class $\mathcal{
  K}_\varepsilon $. To compare these tests by their power functions we
consider, as usual, the approach of \textit{close} or \textit{contiguous}
alternatives (since for any fixed alternative the power functions of all tests
converge to the same value $1$). We put $\vartheta =\vartheta _1+ \varphi
_nu$, where $\varphi _n=\varphi _n\left(\vartheta _1\right)>0$. Here $\varphi
_n\rightarrow 0$ and the rate of convergence depends on the type of
singularity of the intensity function.

Now the initial problem of hypothesis testing can be rewritten as follows:
\begin{equation}
\label{ht-u}
\begin{aligned}
\mathscr{ H}_1\quad &:\qquad u =0,\\
\mathscr{ H}_2\quad &:\qquad u>0.
\end{aligned}
\end{equation}

The considered tests are usually of the form
$$
\bar\psi _n=\1_{\left\{Y_n\left(X^n\right)>c_\varepsilon
  \right\}}+q_\varepsilon \1_{\left\{Y_n\left(X^n\right)=c_\varepsilon
  \right\}},
$$
where the constant $c_\varepsilon $ is defined with the help of the limit
random variable $Y$ (suppose that $Y_n\Longrightarrow Y$ under hypothesis
$\mathscr{H}_1$) by
$$
\Ex_{\vartheta _1}\bar\psi _n=\Pb_{\vartheta
  _1}\left\{Y_n\left(X^n\right)>c_\varepsilon \right\}+ q_\varepsilon
\Pb_{\vartheta _1}\left\{Y_n\left(X^n\right)=c_\varepsilon
\right\}\longrightarrow \Pb_{\vartheta _1}\left\{Y>c_\varepsilon
\right\}=\varepsilon
$$
if the limit random variable $Y$ is continuous, and by
$$
\Pb_{\vartheta _1}\left\{Y>c_\varepsilon \right\}+ q_\varepsilon
\Pb_{\vartheta _1}\left\{Y=c_\varepsilon \right\}=\varepsilon
$$
if $Y$ has distribution function with jumps.

The corresponding power function will be denoted
$$
\beta\left(\bar\psi _n,u\right)=\Ex_{\vartheta_1+\varphi _n u}\,\bar\psi
_n,\qquad u>0,
$$
and the comparison of the tests will be carried in terms of their limit power
functions.

We consider two different non regular models. In both of them, the intensity
function $\lambda \left(\vartheta,t \right)$ is not differentiable and the
Fisher information is infinite. More precisely, we study the behavior of the
tests in two situations.  The first one is when the intensity function has a
cusp-type singularity (it is continuous but not differentiable), and the
second one is when it has a jump-type singularity (it is discontinuous). In
both cases the intensity functions $\lambda \left(\vartheta ,t\right)$ has no
derivative at the point $t=\vartheta $.

Note that these statistical models were already studied before in the problems
of parameter estimation (see~\cite{Da03} for the cusp-type singularity
and~\cite{Kut98} for discontinuous intensity function), so here we concentrate
on the properties of the tests. The main tool is, of course, the limit
behavior of the normalized likelihood ratio function, which was already
established before in the mentioned works but in a slightly different
settings. The proofs given in this work are mainly based on the results
presented in \cite{Da03} and \cite{Kut98}.

 Recall that in the non regular cases considered in this work we do not have a
 LAUMP tests, and that is why a special attention is paid to numerical
 simulations of the limit power functions.

\section{Cusp-type singularity}

Suppose that the intensity function of the observed Poison processes is
$$
\lambda \left(\vartheta ,t\right)=a\left|t-\vartheta \right|^\kappa +
h\left(t\right),\qquad 0\leq t\leq \tau ,\qquad \vartheta \in \Theta
=[\vartheta _1,b),
$$
where $\kappa \in \left(0,1/2\right)$, $\vartheta _1 >0$, $b\leq\tau $, $a\ne
0$, and $h\left(\cdot \right)$ is a known positive bounded function.

To study the local alternatives we introduce the normalizing function
$$
\varphi _n=n^{-\frac{1}{2H }}\Gamma _{\vartheta _1}^{-\frac{1}{H}},\qquad
\Gamma_{\vartheta_1} ^2=\frac{2a^2B\left(\kappa +1,\kappa
  +1\right)}{h\left({\vartheta_1} \right)} \left[\frac{1}{\cos\left(\pi \kappa
    \right)}-1\right],
$$
where $B\left(\cdot ,\cdot \right)$ is the \textit{Beta-function} and
$H=\kappa +\frac{1}{2}$ is the \textit{Hurst parameter}. As usually, the
change of variables $\vartheta =\vartheta _1+\varphi _n{u}$ reduces the
initial hypothesis testing problem \eqref{ht} to the problem \eqref{ht-u}.

We introduce the stochastic process
$$
Z\left(u\right)=\exp\left\{W^H\left(u\right)-\frac{\left|u\right|^{2H}}{2}
\right\},\qquad u\in\mathbb{R},
$$
where $W^H\left(\cdot \right)$ is a {\it fractional Brownian motion}. Further,
we define the random variable $\hat u$ by the relation
$$
Z\left(\hat u\right)=\sup_{v\geq 0}Z\left(u\right),
$$
and we introduce $h_\varepsilon$ and $g_\varepsilon $ as the solutions of the
equations
\begin{equation}
\label{thresh}
\Pb\left(Z\left(\hat u\right)>h_\varepsilon \right)=\varepsilon\quad
\text{and}\quad \Pb\left(\hat u>g_\varepsilon \right)=\varepsilon
\end{equation}
respectively.

Note that $(Z\left(u\right),\ u\geq 0)$ is the likelihood ratio of a similar
hypothesis testing problem ($u=0$ against $u>0$) in the case of observations
$\left(Y\left(t\right),\ t\geq 0\right)$ of the following type
$$
{\rm d}Y\left(t\right)=\1_{\left\{t<u\right\}}\;{\rm d}t+{\rm
  d}W^H\left(t\right),\qquad t\geq 0.
$$
The uniformly most powerful test in this problem does not exist, and we do not
have a LAUMP tests in our problem.

\subsection{GLRT}

The GLRT is defined by the relations
$$
\hat\psi_n\left(X^n\right)=\1_{\left\{Q\left(X^n\right)> h_\varepsilon
  \right\}},
$$
where $h_\varepsilon$ is the solution of the first of the
equations~\eqref{thresh},
$$
Q\left(X^n\right)= \sup_{\vartheta >\vartheta _1}{L\left(\vartheta,\vartheta
  _1, X^n\right)}=L\left(\hat\vartheta_n,\vartheta _1 ,X^n\right),
$$
and $\hat\vartheta _n$ is the maximum likelihood estimator (MLE).

Let us introduce the function
$$
\hat\beta \left(u_*\right)=\Pb \left\{\sup_{u> 0} \left[W^H\left(u
  \right)-\frac{\left|u-u_*\right|^{2H}}{2}\right] >\ln h_\varepsilon
-\frac{\left|u_*\right|^{2H}}{2}\right\},\quad u_*\geq 0 .
$$

The properties of the GLRT are given in the following Proposition.

\begin{proposition}
The GLRT $\hat\psi_n\left(X^n\right)$ belongs to $\mathcal{ K}_\varepsilon $
and its power function in the case of local alternatives $\vartheta =\vartheta
_1+\varphi _nu_*$, $u_*>0$, has the following limit:
$$
\beta \left(\hat\psi _n,u_*\right)\longrightarrow \hat\beta \left(u_*\right).
$$
\end{proposition}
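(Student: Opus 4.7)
The plan is to reduce everything to the asymptotic behavior of the normalized likelihood ratio process
$$
Z_n(u) := L(\vartheta_1 + \varphi_n u,\vartheta_1,X^n), \qquad u \geq 0,
$$
whose weak convergence to $Z(u)=\exp\{W^H(u)-|u|^{2H}/2\}$, together with suitable uniform exponential tail bounds, was established in \cite{Da03}. Since $Q(X^n)=\sup_{u\geq 0} Z_n(u)$, the continuous mapping theorem combined with those tail estimates yields $Q(X^n)\Longrightarrow \sup_{u\geq 0} Z(u) = Z(\hat u)$ under $\mathscr{H}_1$. By construction, $h_\varepsilon$ is the $(1-\varepsilon)$-quantile of the continuous random variable $Z(\hat u)$, so $\Ex_{\vartheta_1}\hat\psi_n\to\varepsilon$ and hence $\hat\psi_n\in\mathcal{K}_\varepsilon$.

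For the power at the local alternative $\vartheta_{n,*}:=\vartheta_1+\varphi_n u_*$, I would start from the change-of-measure identity
$$
\beta(\hat\psi_n,u_*) = \Ex_{\vartheta_1}\bigl[Z_n(u_*)\,\1_{\{Q(X^n)>h_\varepsilon\}}\bigr],
$$
valid because $\Pb_{\vartheta_{n,*}}$ and $\Pb_{\vartheta_1}$ are mutually absolutely continuous with Radon--Nikodym derivative $Z_n(u_*)$. The joint weak convergence of $(Q(X^n),Z_n(u_*))$ under $\Pb_{\vartheta_1}$ to $(Z(\hat u),Z(u_*))$, together with the uniform integrability of $\{Z_n(u_*)\}$ (which follows from $\Ex_{\vartheta_1}Z_n(u_*)=1$ and the moment bounds of \cite{Da03}), then allows passage to the limit to obtain
$$
\beta(\hat\psi_n,u_*) \longrightarrow \Ex\bigl[Z(u_*)\,\1_{\{Z(\hat u)>h_\varepsilon\}}\bigr].
$$

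To identify this limit with $\hat\beta(u_*)$, I would interpret $Z(u_*)=\exp\{W^H(u_*)-|u_*|^{2H}/2\}$ as a Cameron--Martin density for the Gaussian process $W^H$: under the reweighted measure, $W^H(u)$ acquires the deterministic shift $\Ex[W^H(u)W^H(u_*)]=\tfrac{1}{2}(|u|^{2H}+|u_*|^{2H}-|u-u_*|^{2H})$, while $\widetilde W^H(u):=W^H(u)-\tfrac{1}{2}(|u|^{2H}+|u_*|^{2H}-|u-u_*|^{2H})$ is again a standard fractional Brownian motion under the new law. Substituting into the event $\{\sup_{u>0}[W^H(u)-|u|^{2H}/2]>\ln h_\varepsilon\}$ and simplifying the quadratic terms recasts it as $\{\sup_{u>0}[\widetilde W^H(u)-|u-u_*|^{2H}/2]>\ln h_\varepsilon-|u_*|^{2H}/2\}$, which is exactly the event defining $\hat\beta(u_*)$. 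The main technical hurdle lies in Step~1: controlling $\sup_{u\geq 0}Z_n(u)$ uniformly over the diverging admissible range $u\in[0,(b-\vartheta_1)/\varphi_n]$, i.e.\ ruling out that the supremum is attained at large $u$ with non-negligible probability. This is precisely the exponential estimate on the normalized likelihood ratio proved in \cite{Da03} in the course of analyzing the MLE, which we invoke here.
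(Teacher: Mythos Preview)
Your argument is correct, but it follows a genuinely different route from the paper's.

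The paper works \emph{directly under the alternative}. It introduces the shifted likelihood ratio $\tilde Z_n(v)=L(\vartheta_{u_*}+\varphi_n v,X^n)/L(\vartheta_{u_*},X^n)$, writes $Z_n(u)=\tilde Z_n(-u_*)^{-1}\tilde Z_n(u-u_*)$, and establishes the weak convergence $\tilde{\bf Q}_n^{(\vartheta_{u_*})}\Rightarrow\tilde{\bf Q}$ on $\mathscr{C}_{-u_*}$ by re-verifying the three Ibragimov--Khasminskii conditions (finite-dimensional convergence, H\"older-type bound, exponential tail) for the moving reference value $\vartheta_{u_*}$. The identification of the limit with $\hat\beta(u_*)$ then rests on the stationarity of the increments of $W^H$, via the distributional identity $W^H(\cdot-u_*)-W^H(-u_*)\stackrel{d}{=}W^H(\cdot)$.

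You instead stay entirely under $\Pb_{\vartheta_1}$: the change-of-measure identity plus uniform integrability of $Z_n(u_*)$ (this is essentially Le Cam's third lemma) lets you pass to the limit using only the weak convergence already proved in \cite{Da03}, and then a Cameron--Martin shift for the Gaussian process $W^H$ identifies the limit expectation with $\hat\beta(u_*)$. Your route is more economical here because it avoids redoing the three-lemma verification under the alternative. Two trade-offs are worth noting, however. First, the Cameron--Martin step is specific to the Gaussian limit; the paper's direct approach carries over verbatim to the discontinuous-intensity section, where the limit process is Poisson-driven and no Girsanov-type shortcut is available. Second, the paper's weak convergence~\eqref{wcon} under the alternative is reused in the proofs for the Wald and Bayes tests (Propositions~2--4), so proving it once serves several purposes; your approach would require a separate contiguity argument for each test statistic.
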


\begin{proof}
Introduce the normalized likelihood ratio process
$$
Z_n\left(u\right)=L\left(\vartheta _1+\varphi _n
u,X^n\right)=\frac{L\left(\vartheta _1+\varphi _n
  u,X^n\right)}{L\left(\vartheta _1,X^n\right) },\ \:\ u\in
\mathbb{U}_n^+=\bigl[0,\varphi _n^{-1} \left(b-\vartheta _1\right)\bigr),
$$
and let the function $Z_n\left(u\right)$ be linearly decreasing to zero on the
interval $\left[\varphi _n^{-1} \left(b-\vartheta _1\right),\varphi _n^{-1}
  \left(b-\vartheta _1\right)+1 \right]$ and equal to $0$ for all $u>\varphi
_n^{-1} \left(b-\vartheta _1\right)+1 $. Now the random function
$Z_n\left(\cdot\right)$ is defined on $\mathbb{R}_+$.

Let us fix some $d\leq 0$ and denote $\mathscr{ C}_d= \mathscr{
  C}_d\left(\mathbb{R}_d\right)$ the space of continuous functions on
$\mathbb{R}_d=[d,\infty )$ with the property $\lim_{v\rightarrow \infty }
  z\left(v\right)=0$. Introduce the uniform metric on this space and denote
  $\mathcal{ B}$ the corresponding Borel $\sigma $-algebra.

When we study the likelihood ratio process under hypothesis $\mathscr{ H}_1$,
we take $d=0$ and consider the corresponding measurable space $\left(\mathscr{
  C}_0, \mathcal{ B} \right)$.  Under the alternative $\vartheta =\vartheta
_{u_*}=\vartheta _1+\varphi _nu_*$, $u_*>0$, we will use this space with $
d=-u_*$.

Let ${\bf Q}$ be the measure induced on the measurable space $\left(\mathscr{
  C}_0, \mathcal{ B} \right)$ by the stochastic processes
$(Z\left(u\right),\ u\geq 0)$, and ${\bf Q}_n^{(\vartheta)}$ be the measure
induced (under the true value $\vartheta$) on the same space by the processes
$(Z_n\left(u\right),\ u\geq 0)$. The continuity with probability $1$ of the
random functions $(Z_n\left(u\right),\ u\geq 0)$ follows from the
inequality~\eqref{cont} below and the Kolmogorov theorem.

Suppose that we already proved the following weak convergence
\begin{equation}
\label{wc}
{\bf Q}_n^{(\vartheta_1)}\Longrightarrow {\bf Q}.
\end{equation}
Then the distribution of any continuous in the uniform metric functional $\Phi
\left(Z_n\right)$ converge to the distribution of $\Phi \left(Z\right)$.  In
particular, if we take
$$
\Phi \left(z\right)=\sup_{u\geq 0}z\left(u\right)-h_\varepsilon ,
$$
we obtain
\begin{align*}
\Pb_{\vartheta _1}\left\{\sup_{\vartheta >\vartheta
  _1}{L\left(\vartheta,\vartheta _1, X^n\right)}>h_\varepsilon \right\}&=
\Pb_{\vartheta _1}\left\{\sup_{u>0} Z_n\left(u\right)>h_\varepsilon
\right\}\\
\longrightarrow{}&\Pb\left\{\sup_{u> 0} Z\left(u\right)>h_\varepsilon
\right\}=\Pb \left\{ Z\left(\hat u\right) > h_\varepsilon
\right\}=\varepsilon.
\end{align*}
Therefore the test $\hat\psi _n\in \mathcal{ K}_\varepsilon $.

Let us note, that we do not know an analytic solution of the equation defining
the constant $h_\varepsilon $, that is why below we turn to numerical
simulations (see Section \ref{SSC}).  Note also that $h_\varepsilon
=h_\varepsilon \left(H\right)$ and does not depend on $\Gamma _{\vartheta
  _1}$.

To study the power function we consider the same likelihood ratio process but
under the alternative $\vartheta _{u_*}=\vartheta _1+\varphi _nu_*$. We can
write
\begin{align*}
Z_n\left(u\right)&=\frac{L\left(\vartheta _1+ \varphi_n
  u,X^n\right)}{L\left(\vartheta _1,X^n\right)}= \frac{L\left(\vartheta
  _{u_*},X^n\right)}{L\left(\vartheta _1,X^n\right)}
\frac{L\left(\vartheta _1+ \varphi_n u,X^n\right)}{L\left(\vartheta
  _{u_*},X^n\right)} \\
&=\left(\frac{L\left(\vartheta _{u_*}-\varphi _n
  u_*,X^n\right)}{L\left(\vartheta _{u_*},X^n\right)} \right)^{-1}
\frac{L\left(\vartheta _{u_*}+ (u-u_*)\varphi_n,X^n\right)}{L\left(\vartheta
  _{u_*},X^n\right)}\\
&=\tilde Z_n\left(-u_*\right)^{-1} \tilde Z_n\left(u-u_*\right)
\end{align*}
with an obvious notation. The difference between $Z_n\left(\cdot \right)$ and
$\tilde Z_n\left(\cdot \right)$ is that the ``reference value'' in the first
case is fixed (is equal to $\vartheta _1$) and in the second case it is
``moving'' (is equal to $\vartheta_{u_*}=\vartheta _1+\varphi _n u_*$). The
random variable $\tilde Z_n\left(-u_*\right)$ converge in distribution to $
Z\left(-u_*\right)$. For the stochastic process $(\tilde
Z_n\left(u-u_*\right),\ u\geq 0)$ we have a similar convergence, and so, for
any fixed $u\geq 0$, we have
$$
\left(\tilde Z_n\left(-u_*\right),\tilde
Z_n\left(u-u_*\right)\right)\Longrightarrow \left(Z\left(-u_*\right),
Z\left(u-u_*\right)\right).
$$

Now, let $\tilde {\bf Q}$ be the measure induced on the measurable space
$\left(\mathscr{ C}_{-u_*}, \mathcal{ B} \right)$ by the stochastic processes
$(Z\left(u\right),\ u\geq -u_*)$, and $\tilde{\bf Q}_n^{(\vartheta)}$ be the
measure induced (under the true value $\vartheta$) on the same space by the
stochastic processes $(\tilde Z_n\left(u\right),\ u\geq -u_*)$. Suppose that
we already proved the weak convergence
\begin{equation}
\label{wcon}
\tilde{\bf Q}_n^{(\vartheta_{u_*})}\Longrightarrow \tilde{\bf Q}.
\end{equation}
Then for the power function we can write
\begin{align*}
&\Pb_{\vartheta _{u_*}}\left\{\sup_{u> 0} Z_n\left(u\right)>h_\varepsilon
  \right\}\\
&\quad =\Pb_{\vartheta _{u_*}}\left\{\tilde Z_n\left(-u_*\right)^{-1}\sup_{u>
    0}\frac{L\left(\vartheta _{u_*}+
    (u-u_*)\varphi_n,X^n\right)}{L\left(\vartheta _{u_*},X^n\right)}
  >h_\varepsilon \right\}\\
&\quad\longrightarrow \Pb \left\{Z\left(-u_*\right)^{-1}\sup_{u> 0}\ \exp
  \left\{W^H\left(u-u_*\right)
  -\frac{\left|u-u_*\right|^{2H}}{2}\right\}>h_\varepsilon \right\}\\
&\quad=\Pb \left\{\sup_{u> 0}
  \left[-W^H\left(-u_*\right)+W^H\left(u-u_*\right) -\frac{\left|u-u_*
      \right|^{2H}}{2}+\frac{\left|u_*\right|^{2H}}{2}\right] >\ln
  h_\varepsilon \!\right\}\\
&\quad=\Pb \left\{\sup_{u> 0} \left[W^H\left(u\right)-\frac{\left|u-u_*
      \right|^{2H}}{2}\right] >\ln h_\varepsilon
  -\frac{\left|u_*\right|^{2H}}{2}\right\}= \hat\beta \left(u_*\right) .
\end{align*}

This limit power function is obtained below with the help of numerical
simulations (see Section \ref{SSC}).

Let us also note that the limit (under the alternative $\vartheta
_{u_*}=\vartheta _1+\varphi _n u_*$) of the likelihood ratio process
$\left(Z_n\left(u\right),\ u\geq 0\right)$ is the process
$\left(Z\left(u,u_*\right),\ u\geq 0\right)$ defined by
$$
Z(u,u_*)=W^H\left(u\right)-\frac{\left|u-u_*
  \right|^{2H}}{2}+\frac{\left|u_*\right|^{2H}}{2}.
$$

To finish the proof we need to verify the convergence \eqref{wcon}. To do this
we follow the proof of the convergence \eqref{wc} given in \cite{Da03}.  We
introduce the following relations.

\begin{enumerate}
\item\textit{The finite-dimensional distributions of $(\tilde
  Z_n\left(u\right),\ u\geq -u_*)$ converge to those of
  $\left(Z\left(u\right),\ u\geq -u_*\right)$.}
\item\textit{There exists a positive constant $C$ such that}
\begin{equation}
\label{cont}
\Ex_{\vartheta_{u_*}}\left|\tilde Z_n^{1/2}\left(u_2\right)-\tilde
Z_n^{1/2}\left(u_1\right)\right|^2\leq C\,\left|u_2-u_1\right| ^{2H},\qquad
u_1,u_2\geq -u_*.
\end{equation}
\item\textit{There exists a positive constant $c$ such that}
\begin{equation}
\label{gd}
\Ex_{\vartheta_{u_*}}\tilde Z_n^{1/2}\left(u\right)\leq
\exp\left\{-c\left|u-u_*\right|^{2H}\right\},\qquad u\geq -u_*.
\end{equation}
\end{enumerate}

The proofs of these relations are slight modifications of the proofs given
in~\cite{Da03}. Note that the characteristic function of the vector
$$
\tilde Z_n\left(u_1\right),\ldots,\tilde Z_n\left(u_k\right)
$$
can be written explicitly and the convergence of this characteristic function
to the corresponding limit characteristic function can be checked directly
(see Lemma~5 of~\cite{Da03}). The inequalities~\eqref{cont} and~\eqref{gd}
follow from the Lemma~6 and Lemma~7 of~\cite{Da03} respectively.

These relations allow us to obtain the weak convergence~\eqref{wcon} by
applying the Theorem~1.10.1 of~\cite{IH81}. Note that the
convergence~\eqref{wc} is a particular case of~\eqref{wcon} with $u_*=0$.
\end{proof}

\subsection{Wald test}

Recall that the MLE $\hat\vartheta _n$ is defined by the equation
$$
L\left(\hat\vartheta _n,\vartheta _1,X^n\right)=\sup_{\vartheta \in\Theta
}L\left(\vartheta ,\vartheta _1,X^n\right) .
$$
The Wald test (WT) has the following form:
$$
\psi_n^\circ\left(X^n\right)=\1_{\left\{\varphi_n^{-1}\left(\hat\vartheta
  _n-\vartheta _1\right) >g_\varepsilon \right\}},
$$
where $g_\varepsilon$ is the solution of the second of the
equations~\eqref{thresh}.

Introduce as well the random variable $\hat u_*$ as solution of the equation
$$
Z\left(\hat u_*\right)=\sup_{u\geq -u_*}Z\left(u\right).
$$

\begin{proposition}
The WT $\psi_n^\circ\left(X^n\right)$ belongs to $\mathcal{ K}_\varepsilon $
and its power function in the case of local alternatives $\vartheta =\vartheta
_1+\varphi _n u_*$, $u_*>0$, has the following limit:
$$
\beta \left(\psi _n^\circ,u_*\right)\longrightarrow \beta^\circ
\left(u_*\right)=\Pb\left(\hat u_*>g_\varepsilon -u_*\right).
$$
\end{proposition}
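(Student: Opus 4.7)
The approach is to reformulate the Wald test as a condition on the normalized maximum likelihood estimator and then transfer the weak convergence of the normalized likelihood ratio process already established in the GLRT proof to a weak convergence of its argmax via a continuous mapping argument.

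First, since $\hat\vartheta_n$ maximizes $L(\vartheta, \vartheta_1, X^n)$, the rescaled statistic $\hat u_n := \varphi_n^{-1}(\hat\vartheta_n - \vartheta_1)$ is the argmax of $Z_n(\cdot)$ over $u \geq 0$, and the test reads $\psi_n^\circ = \1_{\{\hat u_n > g_\varepsilon\}}$. Under $\mathscr{H}_1$, the weak convergence \eqref{wc}, together with the exponential tail bound \eqref{gd} (specialized to $u_* = 0$) and the almost sure uniqueness of $\hat u = \arg\max_{u \geq 0} Z(u)$, yields via the argmax continuous mapping theorem the convergence $\hat u_n \Longrightarrow \hat u$. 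The definition of $g_\varepsilon$ then gives $\Pb_{\vartheta_1}(\hat u_n > g_\varepsilon) \to \varepsilon$, so $\psi_n^\circ \in \mathcal{K}_\varepsilon$.

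For the limit power function under the alternative $\vartheta_{u_*} = \vartheta_1 + \varphi_n u_*$, I would invoke the factorization $Z_n(u) = \tilde Z_n(-u_*)^{-1}\tilde Z_n(u - u_*)$ derived in the GLRT proof. Since the first factor does not depend on $u$, maximizing $Z_n$ over $u \geq 0$ amounts to maximizing $\tilde Z_n(v)$ over $v \geq -u_*$, and thus $\hat u_n = u_* + \tilde u_n$ with $\tilde u_n := \arg\max_{v \geq -u_*} \tilde Z_n(v)$. Applying the weak convergence \eqref{wcon} in the space $\mathscr{C}_{-u_*}$, together with the tightness supplied by \eqref{gd} and the almost sure uniqueness of $\hat u_*$, gives $\tilde u_n \Longrightarrow \hat u_*$, whence
\[
\Pb_{\vartheta_{u_*}}\bigl(\hat u_n > g_\varepsilon\bigr) = \Pb_{\vartheta_{u_*}}\bigl(\tilde u_n > g_\varepsilon - u_*\bigr) \longrightarrow \Pb(\hat u_* > g_\varepsilon - u_*) = \beta^\circ(u_*).
\]

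The main obstacle is the argmax step itself, since weak convergence in the uniform metric on $\mathscr{C}_d$ does not by itself imply convergence of argmaxes. Two ingredients are required to bridge this gap: tightness of $\tilde u_n$, which follows from \eqref{gd} because $\Ex_{\vartheta_{u_*}} \tilde Z_n^{1/2}(v)$ is exponentially small in $|v - u_*|^{2H}$ and a Markov-type argument (in the spirit of the proof of Lemma~7 of \cite{Da03}) rules out escape to infinity; and almost sure uniqueness of $\hat u_*$, which follows from the non-degeneracy of the fractional Brownian motion combined with the penalization by $-|v - u_*|^{2H}/2$ at infinity. Granted these two facts, the standard argmax continuous mapping theorem applies and delivers the claimed limit.
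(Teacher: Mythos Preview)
Your proposal is correct and follows essentially the same route as the paper: both reduce the Wald test to the argmax of the shifted process $\tilde Z_n$ over $[-u_*,\infty)$ and pass to the limit via the weak convergence~\eqref{wcon}. The only cosmetic difference is that the paper writes the distribution function of the argmax explicitly as $\Pb_{\vartheta_{u_*}}\bigl(\sup_{-u_*\leq v<x}\tilde Z_n(v)>\sup_{v\geq x}\tilde Z_n(v)\bigr)$ and lets $n\to\infty$ directly, whereas you invoke the argmax continuous mapping theorem by name and spell out its hypotheses (tightness from~\eqref{gd}, a.s.\ uniqueness of $\hat u_*$); these are two phrasings of the same argument.
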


\begin{proof}
The MLE (under hypothesis $\mathscr{ H}_1$) converges in distribution
$$
\varphi_n^{-1}\left(\hat\vartheta _n-\vartheta _1\right)\Longrightarrow \hat
u.
$$
Hence $\psi_n^\circ\in \mathcal{ K}_\varepsilon$. For the proof
see~\cite{Da03}. Recall that this convergence is a consequence of the weak
convergence~\eqref{wc}.

Let us study this estimator under the alternative $\vartheta_{u_*}=\vartheta
_1+\varphi _n u_*$, $u_*>0$. We have
\begin{align*}
&\Pb_{\vartheta_{u_*}}\left(\varphi_n^{-1}\left(\hat\vartheta _n-\vartheta
  _{u_*}\right)<x \right)\\
&\:\ =\Pb_{\vartheta _{u_*}}\left(\sup_{\varphi_n^{-1}\left( \theta
    -\vartheta_{u_*}\right)<x }L\left(\theta ,\vartheta_{u_*},X^n\right) >
  \sup_{\varphi_n^{-1}\left( \theta -\vartheta_{u_*}\right) \geq x
  }L\left(\theta ,\vartheta_{u_*},X^n\right) \right)\\
&\:\ =\Pb_{\vartheta_{u_*}}\left(\sup_{-u_*\leq u<x }\tilde Z_n\left(u\right)
  > \sup_{u\geq x }\tilde Z_n\left(u\right)
  \right)\longrightarrow\Pb\left(\sup_{-u_*\leq u<x }Z\left(u\right) >
  \sup_{u\geq x }Z\left(u\right) \right) \\
&\:\ =\Pb\left(\hat u_* <x\right).
\end{align*}
Here, as before,
$$
\tilde Z_n\left(u\right)=\frac{L\left(\vartheta _{u_*}+\varphi _n
  u,X^n\right)}{L\left(\vartheta _{u_*},X^n\right)},\qquad u\geq -u_*.
$$

Now, the limit of the power function of the WT is deduced from this
convergence:
\begin{align*}
\beta \left(\psi _n^\circ,u_*\right)&=\Pb_{\vartheta _{u_*}}
\left\{\varphi_n^{-1}\left(\hat\vartheta_n-\vartheta
_{u_*}\right)+u_*>g_\varepsilon \right\}\\
&\longrightarrow \Pb\left\{\hat u_* >g_\varepsilon-u_*\right\}=\beta^\circ
\left(u_*\right),
\end{align*}
which concludes the proof.
\end{proof}

Let us note, that we can also give another representation of the limit power
function using the process $\left(Z\left(u,u_*\right),\ u\geq 0\right)$:
$$
\beta \left(\psi _n^\circ,u_*\right)\longrightarrow\Pb\left\{\hat u^*
>g_\varepsilon\right\}=\beta^\circ \left(u_*\right),
$$
where $\hat u^*$ is solution of the equation
$$
Z\left(\hat u^*\right)=\sup_{u\geq 0}Z\left(u,u_*\right).
$$

The threshold $g_\varepsilon $ and the power function $\beta^\circ
\left(\cdot\right)$ are obtained below by numerical simulations (see Section
\ref{SSC}).

\subsection{Bayes tests}

Suppose that the parameter $\vartheta $ is a random variable with \textit{a
  priori} density $p\left(\theta \right)$, $\vartheta _1\leq \theta <b$. This
function is supposed to be continuous and positive.

We consider two Bayes tests. The first one is based on the Bayes estimator,
while the second one is based on the averaged likelihood ratio.

\bigskip
\bigskip

The first test, which we call BT1, is similar to WT, but is based on the Bayes
estimator (BE) rather than on the MLE. Suppose that the loss function is
quadratic. Then the BE $\tilde\vartheta _n$ is given by the following
conditional expectation:
$$
\tilde\vartheta _n=\int_{\vartheta _1}^{b}\theta p\left(\theta|X^n \right){\rm
  d}\theta= \frac{\int_{\vartheta _1}^{b}\theta p\left(\theta
  \right)L\left(\theta ,X^n\right){\rm d}\theta }{\int_{\vartheta _1}^{b}
  p\left(\theta \right)L\left(\theta ,X^n\right){\rm d}\theta }.
$$

We introduce the test BT1 as
$$
\tilde\psi_n\left(X^n\right)=\1_{\left\{\varphi_n^{-1}\left(\tilde\vartheta
  _n-\vartheta _1\right) >k_\varepsilon \right\}},
$$
where the constant $k_\varepsilon $ is solution of the equation
$$
\Pb\left(\tilde u>k_\varepsilon \right)=\varepsilon ,\qquad \tilde
u=\frac{\int_{0}^{\infty }vZ\left(v\right){\rm d}v}{\int_{0}^{\infty
  }Z\left(v\right){\rm d}v}.
$$

Introduce as well the function
$$
\tilde\beta \left(u_*\right)=\Pb\left(\tilde u_*>k_\varepsilon
-u_*\right),\qquad \tilde u_*=\frac{\int_{-u_*}^{\infty }vZ\left(v\right){\rm
    d}v}{\int_{-u_*}^{\infty }Z\left(v\right){\rm d}v},\qquad u_*\geq 0.
$$

\begin{proposition}
The BT1 $\tilde\psi_n\left(X^n\right)$ belongs to $\mathcal{ K}_\varepsilon $
and its power function in the case of local alternatives $\vartheta =\vartheta
_1+\varphi _n u_*$, $u_*>0$, has the following limit:
$$
\beta\left(\tilde\psi_n,u_*\right)\longrightarrow \tilde\beta
\left(u_*\right).
$$
\end{proposition}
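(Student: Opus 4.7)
The plan is to reduce the statement to a continuous mapping argument applied to the weak convergences \eqref{wc} and \eqref{wcon} already established in the proof of the preceding proposition, the main point being to express the normalized Bayes estimator as a smooth functional of the normalized likelihood ratio process.

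First, by changing variable $\theta=\vartheta_1+\varphi_n v$ in the definition of $\tilde\vartheta_n$, one obtains
\[
\varphi_n^{-1}\bigl(\tilde\vartheta_n-\vartheta_1\bigr)
=\frac{\int_{0}^{U_n} v\,p(\vartheta_1+\varphi_n v)\,Z_n(v)\,\mathrm{d}v}{\int_{0}^{U_n} p(\vartheta_1+\varphi_n v)\,Z_n(v)\,\mathrm{d}v},
\qquad U_n=\varphi_n^{-1}(b-\vartheta_1),
\]
and similarly, using the ``moving reference'' process $\tilde Z_n$ introduced earlier, under the alternative $\vartheta_{u_*}$ one has
\[
\varphi_n^{-1}\bigl(\tilde\vartheta_n-\vartheta_{u_*}\bigr)
=\frac{\int_{-u_*}^{U_n-u_*} v\,p(\vartheta_{u_*}+\varphi_n v)\,\tilde Z_n(v)\,\mathrm{d}v}{\int_{-u_*}^{U_n-u_*} p(\vartheta_{u_*}+\varphi_n v)\,\tilde Z_n(v)\,\mathrm{d}v}.
\]
Since $p$ is continuous and positive, $p(\vartheta_1+\varphi_n v)$ and $p(\vartheta_{u_*}+\varphi_n v)$ converge uniformly on compacts to $p(\vartheta_1)$, which cancels from numerator and denominator. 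So the problem is to pass to the limit in the functional $F(z)=\int v z(v)\mathrm{d}v\big/\int z(v)\mathrm{d}v$.

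The main obstacle is that $F$ is not continuous in the uniform metric on $\mathscr{C}_0$ (resp.\ $\mathscr{C}_{-u_*}$): the integrals are taken over an unbounded interval, and the factor $v$ in the numerator makes the functional badly behaved at infinity. To overcome this, the plan is to use the exponential decay estimate \eqref{gd}, namely $\Ex_{\vartheta_{u_*}}\tilde Z_n^{1/2}(u)\le\exp\{-c|u-u_*|^{2H}\}$, together with the analogous statement for $Z$ (which is essentially built into the exponent defining $Z$). These give, for any $\eta>0$, a constant $A=A(\eta)$ such that
\[
\Pb_{\vartheta_{u_*}}\!\left(\int_{|v|>A}(1+|v|)\tilde Z_n(v)\,\mathrm{d}v>\eta\right)<\eta
\]
uniformly in $n$, and the same with $Z$ in place of $\tilde Z_n$, by Markov's inequality applied to $\tilde Z_n^{1/2}\cdot\tilde Z_n^{1/2}$ and Cauchy--Schwarz, or directly through the bound $\Ex\tilde Z_n^{1/2}$. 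Hence both the numerator and the denominator of the defining ratio can be approximated uniformly in $n$ by the truncated integrals over $[-u_*,A]$, and on a compact interval the truncated functional is continuous in the uniform metric. Combined with the positivity of the denominator (which does not degenerate because $Z(0)=1$), this gives the continuous mapping theorem in the form needed.

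Applying this to the weak convergence \eqref{wc} yields $\varphi_n^{-1}(\tilde\vartheta_n-\vartheta_1)\Longrightarrow\tilde u$ under $\mathscr{H}_1$, so $\Ex_{\vartheta_1}\tilde\psi_n\to\Pb(\tilde u>k_\varepsilon)=\varepsilon$ and $\tilde\psi_n\in\mathcal{K}_\varepsilon$. Applying it to \eqref{wcon} yields $\varphi_n^{-1}(\tilde\vartheta_n-\vartheta_{u_*})\Longrightarrow\tilde u_*$ under $\vartheta_{u_*}$. Since
\[
\beta(\tilde\psi_n,u_*)=\Pb_{\vartheta_{u_*}}\!\left\{\varphi_n^{-1}(\tilde\vartheta_n-\vartheta_{u_*})>k_\varepsilon-u_*\right\},
\]
and since $k_\varepsilon-u_*$ is a continuity point of the distribution of $\tilde u_*$ (which is absolutely continuous, $Z(\cdot)$ being a continuous process with no atom at any level), we conclude that $\beta(\tilde\psi_n,u_*)\to\Pb(\tilde u_*>k_\varepsilon-u_*)=\tilde\beta(u_*)$, as required.
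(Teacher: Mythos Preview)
Your proof is correct and follows essentially the same approach as the paper's, which is more terse and defers the details of the truncation argument to~\cite[Theorem 1.10.2]{IH81} and~\cite{Da03}. You spell out explicitly the tail control via the exponential decay estimate~\eqref{gd}, which is precisely the mechanism underlying that theorem; the change of variables, the cancellation of $p(\vartheta_1)$, and the identification of the limit $\tilde u_*$ are exactly as in the paper.
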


\begin{proof}
The Bayes estimator $\tilde \vartheta _n$ is consistent and has the following
limit distribution (under hypothesis $\mathscr{ H}_1$)
$$
\varphi _n^{-1} \left(\tilde\vartheta _n-\vartheta _1\right)\Longrightarrow
\tilde u
$$
(for the proof see \cite{Da03}). Hence $\tilde\psi_n\left(X^n\right)\in
\mathcal{ K}_\varepsilon $.

For the power function we have
\begin{align*}
\beta \left(\tilde\psi _n,u_*\right)&=\Pb_{\vartheta
  _{u_*}}\left\{\varphi_n^{-1}\left(\tilde\vartheta _n-\vartheta _1\right)
>k_\varepsilon \right\}\\
&=\Pb_{\vartheta _{u_*}}\left\{\varphi_n^{-1}\left(\tilde\vartheta
_n-\vartheta _{u_*}\right) >k_\varepsilon -u_*\right\}.
\end{align*}
Let us study the normalized difference $\tilde
u_n=\varphi_n^{-1}\left(\tilde\vartheta _n-\vartheta _{u_*}\right) $. We can
write (using the change of variables $\theta =\vartheta_{u_*}+\varphi _nv$)
\begin{align*}
&\int_{\vartheta _1}^{b}\theta p\left(\theta
  \right)L\left(\theta,\vartheta_{u_*} ,X^n\right){\rm d}\theta\\
&\qquad = \varphi _n\int_{-u_*}^{\varphi _n^{-1}\left(b-\vartheta
    _{u_*}\right)}\left(\vartheta _{u_*}+\varphi _n v\right) p\left(\vartheta
  _{u_*} +\varphi _n v\right)L\left(\vartheta_{u_*}+\varphi _n v,\vartheta
  _{u_*} ,X^n\right){\rm d}v\\
&\qquad = \varphi _n\int_{-u_*}^{\varphi
    _n^{-1}\left(b-\vartheta_{u_*}\right)}\left(\vartheta_{u_*}+\varphi_n
  v\right) p\left(\vartheta_{u_*} +\varphi_n v\right)\tilde
  Z_n\left(v\right){\rm d}v.
\end{align*}
Hence
$$
\tilde u_n=\frac{\int_{-u_*}^{\varphi _n^{-1}\left(b-\vartheta_{u_*}\right)}v
  p\left(\vartheta_{u_*}+\varphi _n v\right)\tilde Z_n\left(v\right){\rm d}v
}{\int_{-u_*}^{\varphi _n^{-1}\left(b-\vartheta_{u_*}\right)}
  p\left(\vartheta_{u_*} +\varphi _n v\right)\tilde Z_n\left(v\right){\rm
    d}v}\Longrightarrow \frac{\int_{-u_*}^{\infty }v Z\left(v\right){\rm d}v
}{\int_{-u_*}^{\infty } Z\left(v\right){\rm d}v}=\tilde u_*
$$
(since $p\left(\vartheta_{u_*}+\varphi_n v\right)\longrightarrow
p\left(\vartheta _1\right)>0 $ and $\tilde Z_n\Longrightarrow Z$). The
detailed proof is based on the properties 1--3 of the likelihood ratio (see
\cite{Da03} or \cite[Theorem 1.10.2]{IH81}).
\end{proof}

Let us note, that we can also give another representation of the limit power
function using the process $\left(Z\left(u,u_*\right),\ u\geq 0\right)$:
$$
\beta \left(\tilde\psi_n,u_*\right)\longrightarrow\Pb\left(\tilde
u^*>k_\varepsilon\right)=\tilde\beta \left(u_*\right),
$$
where $\displaystyle\tilde u^*=\frac{\int_{0}^{\infty }v
  Z\left(v,u_*\right){\rm d}v }{\int_{0}^{\infty } Z\left(v,u_*\right){\rm
    d}v}$.

\bigskip
\bigskip

The second test, which we call BT2, is given by
$$
\tilde\psi_n^\star\left(X^n\right)=\1_{\left\{R_n\left(X^n\right)>m_\varepsilon
  \right\}},\qquad R_n\left(X^n\right)=\frac{ \tilde
  L_n\left(X^n\right)}{p\left(\vartheta _1\right)\varphi _n}.
$$
Here
$$
\tilde L_n\left(X^n\right)=\int_{\vartheta _1}^{b}L\left(\theta ,\vartheta
_1,X^n\right)p\left(\theta \right){\rm d}\theta,
$$
and $m_\varepsilon $ is solution of the equation
$$
\Pb\left\{ \int_{0}^{\infty } Z\left(v\right){\rm d}v >m_\varepsilon
\right\}=\varepsilon.
$$

Introduce as well the function
$$
\tilde\beta^\star\left(u_*\right)=\Pb\left(Z\left(-u_*\right)^{-1}\int_{-u_*}^{\infty
}Z\left(v\right)\,{\rm d}v >m_\varepsilon \right).
$$

\begin{proposition}
The BT2 $\tilde\psi_n^\star\left(X^n\right)$ belongs to $\mathcal{
  K}_\varepsilon $ and its power function in the case of local alternatives
$\vartheta =\vartheta _1+\varphi _n u_*$, $u_*>0$, has the following limit:
$$
\beta\left(\tilde\psi_n^\star,u_*\right)\longrightarrow
\tilde\beta^\star\left(u_*\right).
$$
\end{proposition}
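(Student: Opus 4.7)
The plan is to reduce both parts of the statement---membership in $\mathcal{K}_\varepsilon$ and identification of the limit power function---to the weak convergence of the statistic $R_n$, obtained as a smooth functional of the normalized likelihood ratio and handled with the process-level convergences \eqref{wc} and \eqref{wcon} already established in the previous propositions. The key algebraic step is the change of variables $\theta=\vartheta _1+\varphi _n v$ in the integral defining $\tilde L_n$, which gives
$$
R_n(X^n)=\frac{1}{p(\vartheta_1)}\int_{0}^{\varphi _n^{-1}(b-\vartheta _1)}Z_n(v)\,p(\vartheta _1+\varphi _n v)\,{\rm d}v,
$$
so that under $\mathscr{H}_1$ the statistic $R_n$ is a functional of $Z_n\in \mathscr{C}_0$ only.

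For the size part I would combine \eqref{wc} with the continuity of $p$ at $\vartheta _1$ (which implies $p(\vartheta _1+\varphi _n v)\to p(\vartheta _1)$ uniformly on compacts) to obtain, for every fixed $A>0$,
$$
\frac{1}{p(\vartheta _1)}\int_{0}^{A}Z_n(v)\,p(\vartheta _1+\varphi _n v)\,{\rm d}v\Longrightarrow \int_{0}^{A}Z(v)\,{\rm d}v,
$$
and then let $A\to\infty$ to deduce $R_n\Longrightarrow \int_{0}^{\infty }Z(v)\,{\rm d}v$; the defining equation of $m_\varepsilon $ immediately yields $\tilde\psi _n^\star\in\mathcal{K}_\varepsilon $. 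For the power, I would exploit the factorization already used in the GLRT proof, $Z_n(v)=\tilde Z_n(-u_*)^{-1}\tilde Z_n(v-u_*)$, followed by the shift $w=v-u_*$, to obtain
$$
R_n=\frac{\tilde Z_n(-u_*)^{-1}}{p(\vartheta _1)}\int_{-u_*}^{\varphi _n^{-1}(b-\vartheta _{u_*})}\tilde Z_n(w)\,p(\vartheta _{u_*}+\varphi _n w)\,{\rm d}w,
$$
and then apply \eqref{wcon} jointly to the scalar factor $\tilde Z_n(-u_*)$ and the truncated integral, with the almost sure positivity of $Z(-u_*)$ ensuring continuity of the division step. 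This gives $R_n\Longrightarrow Z(-u_*)^{-1}\int_{-u_*}^{\infty }Z(w)\,{\rm d}w$ and hence $\Pb_{\vartheta _{u_*}}(R_n>m_\varepsilon )\to\tilde\beta^\star(u_*)$.

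The main technical obstacle is that the functional $z\mapsto \int_{0}^{\infty }z(v)\,{\rm d}v$ is \emph{not} continuous for the uniform metric on $\mathscr{C}_0$, so the passage to the limit cannot be performed by a bare application of the continuous mapping theorem. I plan to handle this exactly as in the proof of the BT1 proposition, by the truncation argument from Theorem~1.10.2 of~\cite{IH81}: the truncated functional $z\mapsto \int_{0}^{A}z(v)\,{\rm d}v$ is continuous, and the remainder $\int_{A}^{\infty }\tilde Z_n(w)\,{\rm d}w$ is made arbitrarily small in probability, uniformly in $n$, by combining the exponential estimate \eqref{gd} with the increment bound \eqref{cont} via a standard chaining over unit intervals (Markov's inequality applied to $\tilde Z_n^{1/2}$). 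The same tail bound also handles the denominator $p(\vartheta _{u_*}+\varphi _n w)$ (which is uniformly bounded thanks to the continuity and positivity of $p$), so no new ingredients beyond those invoked in~\cite{Da03} and in the preceding propositions are required.
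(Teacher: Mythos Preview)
Your proposal is correct and follows essentially the same route as the paper: the paper also reduces everything to the convergence $R_n\Longrightarrow\int_0^\infty Z(v)\,{\rm d}v$ (resp.\ $Z(-u_*)^{-1}\int_{-u_*}^\infty Z(v)\,{\rm d}v$) via the change of variables $\theta=\vartheta_1+\varphi_n v$ and the process-level convergences~\eqref{wc}--\eqref{wcon}, invoking \cite[Theorem~1.10.2]{IH81} and~\cite{Da03} for the tail control of the integral, which is exactly the truncation-plus-\eqref{gd}--\eqref{cont} argument you spell out. The only difference is that the paper additionally records the Neyman--Pearson interpretation of BT2 (minimizing the mean second-kind error), but this is motivational and not needed for the proof itself.
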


\begin{proof}
Let us first recall how this test was obtained. Introduce the mean error of
the second kind $\bar\alpha \left(\bar\psi _n\right)$ under alternative
$\mathscr{ H}_2$ of an arbitrary test $\bar\psi _n$ as
$$
\bar\alpha \left(\bar\psi _n\right)=\int_{\vartheta _1}^{b}\Ex_\theta \bar\psi
_n\left(X^n\right)\,p\left(\theta \right)\,{\rm d}\theta=\EE\bar\psi _n,
$$
where $\EE$ is the double mathematical expectation, that is, the expectation
with respect to the measure
$$
\PP \left(X^n\in A\right)=\int_{\vartheta _1}^{b}\Pb_\theta \left(X^n\in
A\right)p\left(\theta \right)\,{\rm d}\theta .
$$
If we consider the problem of the minimization of this mean error, we reduce
the initial hypothesis testing problem to the problem of testing of two simple
hypotheses
\begin{align*}
&\mathscr{ H}_1\quad :\quad X^n \quad \sim\quad \Pb_{\vartheta _1},\\
&\mathscr{ H}_2\quad :\quad X^n \quad \sim\quad \PP.
\end{align*}
Then, by the Neyman-Pearson Lemma, the most powerful test in the
class~$\mathcal{ K}_\varepsilon $ $\bigl($which minimizes the mean error
$\bar\alpha \bigl(\bar\psi _n\bigr)\bigr)$ is
$$
\tilde\psi_n ^*\left(X^n\right)=\1_{\left\{\tilde L_n\left(X^n\right)>\tilde
  m_\varepsilon \right\}},
$$
where the averaged likelihood ratio
$$
\tilde L_n\left(X^n\right)= \frac{{\rm d}\PP\hfill}{{\rm d}\Pb_{\vartheta
    _1}}\left(X^n\right)=\varphi_n\int_{0}^{\varphi_n^{-1}\left(\beta
  -\vartheta _1\right)} Z_n\left(v\right)p\left(\vartheta
_1+v\varphi_n\right){\rm d}v
$$
and $\tilde m_\varepsilon $ is chosen from the condition $\tilde\psi_n^*\in
\mathcal{ K}_\varepsilon $. Now, it is clear that the BT2 $\tilde\psi_n^\star
\left(X^n\right)$ coincides with the test $\tilde\psi_n ^*\left(X^n\right)$ if
we put $\tilde m_\varepsilon =m_\varepsilon\,p\left(\vartheta _1\right)
\varphi _n $.

In the proof of the convergence in distribution of the Bayes estimator it is
shown (see \cite[Theorem 1.10.2]{IH81} and \cite{Da03}) that
$$
\varphi_n^{-1}\tilde L\left(X^n\right)\Longrightarrow p\left(\vartheta
_1\right)\int_{0}^{\infty } Z\left(v\right) {\rm d}v.
$$
Therefore (under hypothesis $\mathscr{ H}_1$),
$$
R_n\left(X^n\right)\Longrightarrow \int_{0}^{\infty }Z\left(v\right)\,{\rm d}v
$$
and the test $\tilde\psi_n^\star\left(X^n\right) $ belongs to the class
$\mathcal{ K}_\varepsilon $.

Using a similar argument, we can verify the convergence
$$
R_n\left(X^n\right)\Longrightarrow Z\left(-u_*\right)^{-1}\int_{-u_*}^{\infty
}Z\left(v\right)\,{\rm d}v
$$
under the alternative $\vartheta_{u_*}$, which concludes the proof.
\end{proof}

\subsection{Simulations}
\label{SSC}

Let us consider the following example.  We observe $n$ independent
realizations $X^n=\left(X_1,\ldots,X_n\right)$, where $ X_j=\left(X_j(t),\ t
\in \left[0,2\right]\right)$, $ j=1,\ldots,n$, of an inhomogeneous Poisson
process. The intensity function of this processes is
$$
\lambda(\vartheta,t)=2-\left|t-\vartheta\right|^{0.4} ,\qquad 0\leq t\leq 2,
$$
where the parameter $\vartheta \in \left[0.5, 2\right)$.  We take
  $\vartheta_1=1.5$ as the value of the basic hypothesis $\mathscr{ H}_1$. Of
  course it is sufficient to have simulations for the values $\vartheta \in
  \left[1.5, 2\right)$, but we consider a wider interval to show the behavior
    of the likelihood ratio on the both sides of the true value.  The Hurst
    parameter is $H=0.9$ and the constant
    $\Gamma^2_{\vartheta_1}=B(1.4,1.4)\Bigl[\frac{1}{\cos(0.4\pi)}-1\Bigr]
    \approx 1.027$.

A realization of the normalized likelihood ratio $Z_n\left(u\right)$, $u\in
\left[-5,5\right]$, and its zoom $Z_n\left(u\right)$, $u\in
\left[0.1,0.5\right]$, under the hypothesis $\mathscr{H}_1$ are given in
Figure~\ref{Z_n_u_cusp}.

\vbox{
\bigskip
\hrule
\smallskip
Here Figure~\ref{Z_n_u_cusp}
\hrule
\bigskip
}

To find the thresholds of the GLRT and of the WT, we need to find the point of
maximum and the maximal value of this function. In the case of the chosen
intensity function, the maximum is attained at one of the cusps of the
likelihood ratio (that is, on one of the events of one of the observed Poisson
processes).

It is interesting to note that if the intensity function has the same
singularity but with a different sign:
$\lambda\left(\vartheta,t\right)=0.5+\left|t-\vartheta\right|^{0.4} $, then it
is much more difficult to find the maximum (see
Figure~\ref{Z_n_u_cusp_invers}).

\vbox{
\bigskip
\hrule
\smallskip
Here Figure~\ref{Z_n_u_cusp_invers}.
\hrule
\bigskip
}

The thresholds of the GLRT, of the WT and of the BT1 are presented in
Table~\ref{Thr_cusp}.

\begin{table}[htb]
\begin{center}
\begin{tabular}{|c|c|c|c|c|c|c|c|}
  \hline
  $\varepsilon$ & 0.01 & 0.05 & 0.10 & 0.2 & 0.4 & 0.5 \\
  \hline
  $\ln h_\varepsilon$ & 2.959 & 1.641 & 1.081 & 0.559 & 0.159& 0.068 \\
  \hline
  $g_\varepsilon$ & 3.041 & 1.996 & 1.521 & 0.950 & 0.333& 0.166 \\
  \hline
  $k_\varepsilon$ & 2.864 & 2.0776 & 1.720 & 1.365 & 1.005& 0.885 \\
  \hline
\end{tabular}
\end{center}
\caption{\label{Thr_cusp}Thresholds of GLRT, WT and BT1}
\end{table}

For example, the thresholds of the GLRT are obtained by simulating $M=10^5$
trajectories of $Z^i(u)$, $u\in\left[0,20\right]$, $i=1,\ldots,M$ (when $u>20$
the value of $Z^i(u)$ is negligible), calculating for each of them the
quantity $\sup_u Z^i(u)$, and taking the ($1-\varepsilon$)M-th greatest
between them.

For the computation of the power function we calculate the frequency of
accepting the alternative hypothesis. For example, for the GLRT we use
$$
\beta\left(\hat\psi_n,u\right)\approx \frac{1}{N}\sum_{i=1}^N
\1_{\left\{\sup\limits_{v>0} Z_{n,i}(v)>h_\varepsilon\right\}}.
$$
We can see in Figure~\ref{PF_cusp_1} that, like in the regular case, for the
small values of~$u$ the power function of the WT converge more slowly than
that of the GLRT, but still more quickly than that of the BT1. When $u$ is
large, the power function of the BT1 converge more quickly than that of the
WT, and the power function of the GLRT converge the most slowly.

\vbox{
\bigskip
\hrule
\smallskip
Here Figure~\ref{PF_cusp_1}.
\hrule
\bigskip
}

Since analytic expressions for the power functions of these three tests are
not yet available, we compare them with the help of numerical simulations. It
is equally interesting to compare them to the Neyman-Pearson Test (N-PT)
constructed in the following problem of testing of two simple hypotheses.

Let us fix an alternative
$\vartheta_{u_*}=\vartheta_1+u_*\varphi_n>\vartheta_1$ and consider the
hypothesis testing problem
\begin{align*}
\mathscr{ H}_1\quad &:\qquad u  =0,\\
\mathscr{ H}_2\quad &:\qquad u =u_*.
\end{align*}
The Neyman-Pearson test is
$$
\psi_n^*\left(X^n\right)=\1_{\left\{Z_n(u_*)> d_\varepsilon\right\}},
$$
where the threshold $d_\varepsilon $ is the solution of the equation
$$
\Pb\left(Z\left(u_*\right)> d_\varepsilon\right)=\varepsilon.
$$
Recall that $Z_n\left(u_*\right)\Longrightarrow Z(u_*)$ and
$$
Z\left(u_*\right)=\exp\left\{W^H\left(u_*\right)- \frac{u_*^{2H}}2 \right\}.
$$
Hence
$$
\Pb\left(Z\left(u_*\right)>
d_\varepsilon\right)=\Pb\left\{W^H\left(u_*\right)- \frac{u_*^{2H}}2 >\ln
d_\varepsilon \right\}=\Pb\left(\zeta >\frac{\ln d_\varepsilon
  +\frac{u_*^{2H}}2}{u_*^{H}}\right)
$$
and
$$
d_\varepsilon =e^{z_\varepsilon u_*^H-\frac{u_*^{2H}}2},
$$
where $\zeta \sim \mathcal{ N}\left(0,1\right)$ and $\Pb\left(\zeta
>z_\varepsilon \right)=\varepsilon$.

Of course, it is impossible to use this N-PT in our initial problem,
since~$u_*$ (the value of $u$ under alternative) is unknown. However, as this
test is the most powerful in the class $ \mathcal{ K}_\varepsilon $, its power
(as function of $u_*$) shows an upper bound for power functions of all the
tests, and the distances between it and the power functions of the studied
tests provide useful information.

To study the likelihood ratio function under the alternative we write
$$
Z_n(u_*)=\frac{L\left(\vartheta_1+u_*\varphi_n,X^n\right)}{L\left(\vartheta_1,X^n\right)}
=\left(\frac{L\left(\vartheta_1+u_*\varphi_n-u_*\varphi_n,X^n\right)}
{L\left(\vartheta_1+u_*\varphi_n,X^n\right)}\right)^{-1}.
$$
So, for the power of the N-PT, we obtain
\begin{align*}
\beta(\psi_n^*,u_*)&=\Pb_{\vartheta_1+u_*\varphi_n}
\left(Z_n\left(u_*\right)>d_\varepsilon\right)\\
&\longrightarrow \Pb\left(Z\left(-u_*\right)^{-1}>d_\varepsilon\right)
=\Pb\left(\exp\left\{-W^H\left(-u_*\right)+
\frac{u_*^{2H}}2\right\}>d_\varepsilon\right)\\
&=\Pb\left(\zeta >\frac{\ln d_\varepsilon
    -\frac{u_*^{2H}}2}{u_*^{H}}\right)=\Pb\left(\zeta >z_\varepsilon
  -u_*^{H}\right).
\end{align*}

\vbox{
\bigskip
\hrule
\smallskip
Here Figure~\ref{PF_cusp_comp}.
\hrule
\bigskip
}

We can see that the limit power function of the GLRT is the closest one to the
limit power function of the N-PT. When $u$ is small, the limit power function
of the BT1 is lower than that of the GLRT. It becomes closer to that of the
N-PT when $u$ increases. At the same time, the limit power function of the WT
become the lowest one. Let as also mention that the limit power function of
the BT1 arrives faster to $1$ than the others (see Figure~\ref{PF_cusp_comp}).

\section{Discontinuous intensity}

Here we consider a similar hypothesis testing problem in the case of
inhomogeneous Poisson processes with discontinuous intensity function. Suppose
that the intensity function $\lambda \left(\vartheta ,t\right)$, $0\leq t\leq
\tau$, of the observed Poisson processes satisfies the following condition.

\bigskip
\textbf{\textit{S}}. \textit{The intensity function
  $\lambda(\vartheta,t)=\lambda(t-\vartheta)$, where the unknown parameter
  $\vartheta \in \Theta=\left[\vartheta _1,b\right) \subset
  \left(0,\tau\right)$, the function $\lambda(s)$, $s\in\left[-b,\tau
    -\vartheta _1\right]$, is continuously differentiable everywhere except at
  the point $t_*\in \left(-\vartheta_1,\tau-b \right)$ and this function has a
  jump at the point $t_*$ (and so, the intensity function
  $\lambda(\vartheta,t)$ has a jump at the point $t=t_*+\vartheta \in
  \left(0,\tau \right)$).}
\bigskip

We have to test the hypotheses
\begin{align*}
\mathscr{ H}_1\quad &:\qquad \vartheta =\vartheta _1,\\ \mathscr{ H}_2\quad
&:\qquad \vartheta >\vartheta _1.
\end{align*}

We study the same tests as before (GLRT, WT, BT1 and BT2), and our goal is to
chose the thresholds so, that these tests belong to the class $\mathcal{
  K}_\varepsilon $. Let us denote $\lambda(t_*+)=\lambda_+$,
$\lambda(t_*-)=\lambda_- $ and $\rho =\frac{\lambda_-}{\lambda_+}$. To compare
the power functions of the tests, we consider local alternatives which in this
problem are given by $\vartheta =\vartheta _1+u\varphi_n$,
$\varphi_n=\frac1{n\lambda_+}$. The initial problem is thus reduced to the
following one
\begin{align*}
\mathscr{ H}_1\quad &:\qquad u  =0,\\
\mathscr{ H}_2\quad &:\qquad u >0.
\end{align*}

Recall that the normalized likelihood ratio
$$
Z_n\left(u\right)=\frac{L\left(\vartheta _1+u\varphi
  _n,X^n\right)}{L\left(\vartheta _1,X^n\right)} , \qquad
u\in\mathbb{U}_n^+=\bigl[0, n \lambda_+ \left(b-\vartheta _1\right)\bigr),
$$
under the hypothesis $\mathscr{ H}_1$ converges to the process
$$
Z\left(u\right)=\exp\left\{\ln\rho\; x_*\left( u\right)-\left(\rho
-1\right)u\right\},\qquad u\geq 0,
$$
where $(x_*\left(u\right),\ u\geq 0)$ is a Poisson process of unit intensity
(see~\cite{Kut84}).

As we will see below, the limit likelihood ratio under the alternative
$\vartheta_1+u_*\varphi _n$, $u_*>0$, is
$$
Z\left(u,u_*\right)=\exp\left\{\ln\rho\; x_*\left(u,u_*\right)-\left(\rho
-1\right)u\right\},\quad u\geq 0,
$$
where $(x_*\left(u,u_*\right),\ u\geq 0)$ is a Poisson process with
``switching'' intensity function
$$
\mu \left(u,u_*\right)=\rho \,\1_{\left\{u<u_* \right\}}+\1_{\left\{u\geq
  u_*\right\}},\quad u\geq 0.
$$

Note that the limit likelihood ratio of our problem is the likelihood ratio of
a similar hypothesis testing problem ($u=0$ against $u>0$) in the case of
observations of a Poisson process $(Y\left(t\right),\ t\geq 0)$ with
``switching'' intensity function $\mu \left(t,u\right)= \rho \,\1_{\left\{t<u
  \right\}}+\1_{\left\{t\geq u\right\}}$, $t\geq 0$.

\subsection{Weak convergence}
\label{SecWC}

The considered tests (GLRT, WT, BT1 and BT2) are functionals of the likelihood
function $L\left(\cdot ,X^n\right)$. As it was shown above, all these tests
can be written as functionals of the normalized likelihood ratio
$Z_n\left(\cdot \right)$. Therefore, as in regular and cusp-type cases, we
have to prove the weak convergence of the measures induced by the normalized
likelihood ratio under hypothesis (to find the thresholds) and under
alternative (to describe the power functions).

Let $\mathscr{D}_0$ be the space of functions $z(\cdot)$ on
$\mathbb{R}_+=\left[0,+\infty\right)$ which do not have discontinuities of the
  second kind and which are such that $\lim\limits_{v\rightarrow \infty
  }z(v)=0$. We suppose that the functions $z(\cdot)\in\mathscr{D}_0$ are
  c\`adl\`ag, that is, the left limit $z(t-)=\lim\limits_{s \nearrow t}z(s)$
  exists and the right limit $z(t+)=\lim\limits_{s \searrow t}z(s)$ exists and
  equals to $z(t)$. Introduce the distance between two function
  $z_1\left(\cdot \right)$ and $z_2\left(\cdot \right)$ as
$$
d(z_1,z_2)=\inf_\nu\Bigl[\,\sup_{u\in\mathbb{R}_+}\left|z_1(u)-z_2\bigl(\nu(u)
  \bigr)\right|+\sup_{u\in\mathbb{R}_+}\left|u-\nu(u)\right|\,\Bigr],
$$
where the $\inf$ is taken over all monotone continuous one-to-one mappings
$\nu:\mathbb{R}_+\longrightarrow\mathbb{R}_+$. Let us also denote
$$
\Delta_h (z)= \sup_{u\in\mathbb{R}_+} \sup_{\delta} \biggl\{ \min
\Bigl[\bigl|z(u^{'}) - z(u)\bigr|,\bigl|z(u^{''})-z(u)\bigr|\Bigr] \biggr\} +
\sup_{|u|>1/h}\bigl|z(u)\bigr|,
$$
where the second $\sup$ is taken over the intervals $\delta=\left[
  u^{'},u^{''}\right)\, \subseteq \left[ u-h,u+h\right)$ such that
    $u\in\delta$.

 Suppose that we have a sequence $\left(Y_n\right)_{n\geq 1}$ of stochastic
 processes $\bigl($with $Y_n=(Y_n(u),\ u \in \left[0,+\infty\right))\bigr)$
   and a process $Y_0=(Y_0(u),\ u \in \left[0,+\infty\right))$ such that the
     realizations of these processes belong to the space
     $\mathscr{D}_0$. Denote ${\bf Q}_n^{(\vartheta)}$ and ${\bf
       Q}^{(\vartheta)}$ the distributions (which we suppose depending on a
     parameter $\vartheta \in \Theta $) induced on the measurable space
     $(\mathscr{D}_0,\mathcal{ B})$ by these processes. Here $\mathcal{ B} $
     is the Borel $\sigma $-algebra of the metric space $\mathscr{D}_0$.

\begin{theorem}
If, as $n \rightarrow \infty $, the finite dimensional distributions of the
process $Y_n $ converge to the finite dimensional distributions of the process
$Y_0$ uniformly in $\vartheta \in \Theta$ and for any $\delta>0$ we have
\begin{equation}
\label{11}
\lim_{h\rightarrow 0}\,\Limsup_{n \rightarrow \infty }\,\sup_{\vartheta \in
  \Theta}\,{\bf Q}_n^{(\vartheta)}\{\Delta_h (Y_n)>\delta\}=0,
\end{equation}
then ${\bf Q}_n^{(\vartheta)}\Longrightarrow{\bf Q}^{(\vartheta)}$ uniformly
in $\vartheta \in \Theta $ as $n\rightarrow \infty $.
\end{theorem}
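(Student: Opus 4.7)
The plan is to establish this via the classical Prohorov-type strategy: combine uniform tightness of the family $\{{\bf Q}_n^{(\vartheta)}: n\geq 1, \vartheta\in\Theta\}$ on the Polish space $(\mathscr{D}_0,d)$ with the uniform convergence of finite-dimensional distributions. The modulus $\Delta_h$ is designed precisely so that the relative compactness criterion on $\mathscr{D}_0$ takes the form: a set $K\subset\mathscr{D}_0$ is precompact if and only if $\lim_{h\to 0}\sup_{z\in K}\Delta_h(z)=0$ together with a uniform bound on $\sup_{u\leq M}|z(u)|$ for each $M$. The first term of $\Delta_h$ is the usual Skorokhod modulus (handling jumps via the $\min$ of left/right increments), while the second term $\sup_{|u|>1/h}|z(u)|$ forces the vanishing condition at infinity; the proof of this compactness statement is a standard adaptation of the Billingsley/Gikhman--Skorokhod criterion on $D[0,T]$ to the càdlàg functions on $\mathbb{R}_+$ vanishing at infinity.

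Using this criterion, the first step is to deduce uniform tightness from hypothesis~\eqref{11}. Given $\eta>0$, I would choose sequences $h_k\downarrow 0$ and $n_k\uparrow\infty$ such that
\[
\sup_{\vartheta\in\Theta}{\bf Q}_n^{(\vartheta)}\!\left\{\Delta_{h_k}(Y_n)>\tfrac1k\right\}<\eta\,2^{-k},\qquad n\geq n_k,
\]
which is possible by~\eqref{11}. The set $K_\eta=\{z\in\mathscr{D}_0:\Delta_{h_k}(z)\leq 1/k\text{ for all }k\}$ is relatively compact by the criterion above, and for $n$ large one has ${\bf Q}_n^{(\vartheta)}(K_\eta^c)\leq\eta$ uniformly in $\vartheta$. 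The finitely many small values of $n$ are handled separately, since each ${\bf Q}_n^{(\vartheta)}$ is a probability on a Polish space and hence individually tight, with the uniformity in $\vartheta\in\Theta$ recovered by the finite-dimensional convergence (which controls the marginals). This yields the uniform tightness of $\{{\bf Q}_n^{(\vartheta)}\}$.

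The second step passes to uniform weak convergence by a subsequence/contradiction argument. Suppose the conclusion fails: there exist a bounded continuous $F:\mathscr{D}_0\to\mathbb{R}$, some $\delta>0$, a subsequence $n_k\to\infty$ and parameters $\vartheta_{n_k}\in\Theta$ with $\bigl|\int F\,d{\bf Q}_{n_k}^{(\vartheta_{n_k})}-\int F\,d{\bf Q}^{(\vartheta_{n_k})}\bigr|\geq\delta$. By the uniform tightness of both families, extract a further subsequence along which ${\bf Q}_{n_k}^{(\vartheta_{n_k})}$ and ${\bf Q}^{(\vartheta_{n_k})}$ both converge weakly in $\mathscr{D}_0$. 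The uniform finite-dimensional convergence forces the two weak limits to have identical finite-dimensional distributions, and since finite-dimensional distributions determine the law on $\mathscr{D}_0$, the two limits coincide. This contradicts the separation by $F$ and $\delta$, completing the proof.

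The main technical obstacle is not the soft Prohorov machinery but the verification that $\Delta_h$ is genuinely the right modulus for $\mathscr{D}_0$, in particular that its two pieces together characterize precompactness: the Skorokhod-type term alone controls local oscillations on compact windows, but the non-compactness of $\mathbb{R}_+$ combined with the requirement that functions decay to zero at infinity forces the inclusion of the tail term $\sup_{|u|>1/h}|z(u)|$, and one must check that these two pieces interact correctly (there is no ambiguity in the Skorokhod time-change $\nu$ on an unbounded domain, and the tail term is preserved under $\nu$ up to an $h$-dependent adjustment). Once this compactness characterization is in place, the rest is a routine combination of tightness with finite-dimensional convergence, uniformized in $\vartheta$ by the subsequence argument above.
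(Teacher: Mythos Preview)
The paper does not prove this theorem at all: it simply cites \cite{GS69}, Theorem~9.5.2, and moves on. Your Prohorov-type sketch (compactness criterion for $\mathscr{D}_0$ via the modulus $\Delta_h$, uniform tightness from~\eqref{11}, identification of subsequential limits through finite-dimensional distributions, and a diagonal/contradiction argument for the uniformity in $\vartheta$) is precisely the classical route taken in Gikhman--Skorokhod, so in substance you are reproducing the proof that the paper outsources.

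One small point worth tightening: in your construction of $K_\eta$ you impose only the conditions $\Delta_{h_k}(z)\leq 1/k$, but, as you yourself note, precompactness in $\mathscr{D}_0$ also requires a uniform bound on $\sup_{u\leq M}|z(u)|$ for each finite $M$. This does not come for free from the modulus alone. The standard fix is to use the (uniform) convergence of one-dimensional distributions to get tightness of $Y_n(u_0)$ at a single point $u_0$, and then propagate a sup-bound on compacts via the Skorokhod modulus; you should make this step explicit when you build $K_\eta$.
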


For the proof see \cite{GS69}, Theorem 9.5.2.

Recall that such a weak convergence of the likelihood ratio process
$Z_n\left(\cdot \right)$ for the discussed model of inhomogeneous Poisson
process was already established in \cite[Sections 4.4 and 5.4.3]{Kut84} (see
as well \cite[Chapter 5]{Kut98} for similar results). The proof given there
corresponds to the weak convergence in the space
$\left(\mathscr{D}_0,\mathcal{ B}\right)$ of $Z_n\left(\cdot \right)$ under
hypothesis $\mathscr{ H}_1$.  The limit process under the alternative
$\vartheta_1+u_*\varphi_n$ is different and we study it below in order to
describe the power functions.

Let now ${\bf Q}$ be the measure induced on the measurable space
$\left(\mathscr{ D}_0, \mathcal{ B} \right)$ by the stochastic processes
$(Z\left(u,u_*\right),\ u\geq 0)$, and ${\bf Q}_n^{(\vartheta)}$ be the
measure induced (under the true value $\vartheta$) on the same space by the
processes $(Z_n\left(u\right),\ u\geq 0)$.

\begin{proposition}
Let the condition \textbf{S} be fulfilled. Then, under the alternative
$\vartheta_{u_*}=\vartheta_1+u_*\varphi_n$, we have the convergence
\begin{align}
\label{11a}
{\bf Q}_n^{(\vartheta_{u_*})}\Longrightarrow{\bf Q}.
\end{align} 
\end{proposition}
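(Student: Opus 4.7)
The plan is to apply the Gikhman--Skorokhod theorem stated just above, by verifying (a) convergence of the finite-dimensional distributions of $Z_n$ to those of $Z(\cdot,u_*)$ under $\Pb_{\vartheta_{u_*}}$, and (b) the tightness-type estimate \eqref{11} for $Z_n$. The conclusion \eqref{11a} then follows immediately. The overall structure parallels the convergence under $\mathscr{H}_1$ proved in \cite[Ch.~5]{Kut98}, with the key novelty that the reference intensity $\lambda(\vartheta_1,\cdot)$ and the true intensity $\lambda(\vartheta_{u_*},\cdot)$ have their jumps at different locations $t_*+\vartheta_1$ and $t_*+\vartheta_1+u_*\varphi_n$, which is exactly what produces the switching intensity $\mu(\cdot,u_*)$ in the limit.

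For (a), I would start from the log-likelihood ratio formula of Section~2. For $u>0$ small, the integrand
$\ln\bigl(\lambda(t-\vartheta_1-u\varphi_n)/\lambda(t-\vartheta_1)\bigr)$
is, up to a term that is $o(1)$ uniformly on compact sets in $u$ (by $C^1$-smoothness of $\lambda$ away from $t_*$), concentrated on the window $I_n(u)=[t_*+\vartheta_1,\ t_*+\vartheta_1+u\varphi_n]$, where it equals $\ln\rho$. Hence
$$
\ln Z_n(u)=\ln\rho\cdot N_n(u)-n\!\int_{I_n(u)}\!\!\bigl[\lambda(t-\vartheta_1-u\varphi_n)-\lambda(t-\vartheta_1)\bigr]\,dt+o(1),
$$
with $N_n(u)=\sum_{j=1}^n X_j\bigl(I_n(u)\bigr)$. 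The deterministic term equals $-(\rho-1)u+o(1)$ by direct evaluation. Under $\Pb_{\vartheta_{u_*}}$ the true intensity on $I_n(u)$ equals $\lambda_-$ on the initial sub-interval of length $u_*\varphi_n$ and $\lambda_+$ thereafter, so by superposition $N_n$ is a Poisson process whose intensity, after the time change $s\mapsto s/\varphi_n$ (with $\varphi_n=(n\lambda_+)^{-1}$), equals exactly $\mu(u,u_*)=\rho\,\1_{\{u<u_*\}}+\1_{\{u\geq u_*\}}$. Therefore $N_n(\cdot)$ has the same distribution as $x_*(\cdot,u_*)$ for every $n$, and the convergence of the finite-dimensional distributions of $Z_n$ to those of $Z(\cdot,u_*)$ follows.

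For (b), note that $Z_n(\cdot)$ is càdlàg and piecewise of the form $\exp\{\text{const}-(\rho-1)u\}$ between the jumps of $N_n$. On any compact set $[0,T]$ the number of jumps of $N_n$ is stochastically bounded (indeed distributed as a Poisson variable with bounded parameter), so the oscillation $\Delta_h(Z_n)$ coming from $[0,T]$ can be made arbitrarily small by choosing $h$ small. For the remote tail $\sup_{u>1/h}|Z_n(u)|$, I would invoke the exponential moment estimate of Kutoyants-type,
$$
\Ex_{\vartheta_{u_*}}Z_n^{1/2}(u)\leq C\exp\bigl\{-c\,(u-u_*)\bigr\},\qquad u\geq u_*+1,
$$
adapted from the bound for $\Ex_{\vartheta_1}Z_n^{1/2}(u)\leq Ce^{-cu}$ established in \cite[Ch.~5]{Kut98}; combined with a partition of $\{u>1/h\}$ into unit intervals and a Markov inequality this yields \eqref{11}.

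The main obstacle is this last tail estimate under the \emph{alternative}. The proofs in \cite{Kut84,Kut98} handle the $\mathscr{H}_1$ case, and one must re-verify that the same exponential decay holds with $u-u_*$ in place of $u$. This requires re-examining the Hellinger-type computation of $\Ex_{\vartheta_{u_*}}Z_n^{1/2}(u)$ and checking that, for $u\gg u_*$, the contribution from the jump discrepancy between $\lambda(\vartheta_1+u\varphi_n,\cdot)$ and $\lambda(\vartheta_{u_*},\cdot)$ is again a well-separated jump of size comparable to $\lambda_+-\lambda_-$, so that the argument goes through with an explicit shift of the origin. Once (a) and (b) are in hand the cited theorem delivers \eqref{11a}.
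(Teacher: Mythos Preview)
Your overall plan---apply the Gikhman--Skorokhod theorem via convergence of finite-dimensional distributions plus the $\Delta_h$ condition---is exactly the paper's strategy, but the execution differs in two places worth noting.

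For the finite-dimensional distributions, the paper computes the characteristic function $\Ex_{\vartheta_{u_*}}\exp\{i\mu\ln Z_n(u)\}$ explicitly, splitting into the cases $u\le u_*$ and $u>u_*$, and identifies it with that of $\ln Z(u,u_*)$. Your probabilistic argument via the count $N_n(u)=\sum_j X_j\bigl(I_n(u)\bigr)$ is more transparent and gives the same limit; just note that the claim that $N_n(\cdot)$ has \emph{exactly} the law of $x_*(\cdot,u_*)$ for every $n$ is an overstatement, since the true intensity on the window is only $\lambda_\pm+o(1)$. You get convergence of laws, not equality, which is all that is needed.

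For tightness, the paper takes a route that dissolves what you correctly flag as the ``main obstacle''. It factorises
\[
Z_n(u)=Z_n(u_*)\,\widetilde Z_n(u),\qquad \widetilde Z_n(u)=\frac{{\rm d}\Pb_{\vartheta_1+u\varphi_n}\hfill}{{\rm d}\Pb_{\vartheta_{u_*}}\hfill},
\]
so that under $\Pb_{\vartheta_{u_*}}$ the process $\widetilde Z_n$ is a genuine likelihood ratio with reference point $\vartheta_{u_*}$. The paper then proves the two standard Ibragimov--Khasminskii lemmas for $\widetilde Z_n$: the H\"older-type increment bound $\Ex_{\vartheta_{u_*}}\bigl|\widetilde Z_n^{1/2}(u_1)-\widetilde Z_n^{1/2}(u_2)\bigr|^2\le C\,|u_1-u_2|$ and the exponential decay $\Ex_{\vartheta_{u_*}}\widetilde Z_n^{1/2}(u)\le\exp\{-k^*|u-u_*|\}$. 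Both are now plain Hellinger computations, identical in form to the $\mathscr H_1$ proofs in \cite{Kut84,Kut98} with the origin shifted to $\vartheta_{u_*}$; the scalar factor $Z_n(u_*)$ is handled separately by its own convergence in law. By contrast, your proposed bound $\Ex_{\vartheta_{u_*}}Z_n^{1/2}(u)\le Ce^{-c(u-u_*)}$ is \emph{not} a Hellinger affinity (three measures $\Pb_{\vartheta_1}$, $\Pb_{\vartheta_1+u\varphi_n}$, $\Pb_{\vartheta_{u_*}}$ are involved) and would need an additional argument; the factorisation sidesteps this completely. Your jump-counting argument for the oscillations on compacta is heuristically sound, but the H\"older estimate on $\widetilde Z_n$ gives a uniform bound that plugs directly into the machinery of \cite[Section~5.4.3]{Kut84}, which is how the paper closes the proof.
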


The proof is based on several lemmas, where we verify the convergence of the
finite-dimensional distributions and the condition~\eqref{11}.  As in
\cite{Kut84}, we follow the main steps of the proof of Ibragimov and
Khasminskii \cite{IH81} of a similar convergence in the case of
i.i.d.\ observations.

\begin{lemma}
\label{L1}
Let the condition \textbf{S} be fulfilled. Then, under the alternative
$\vartheta_{u_*}$, the finite-dimensional distributions of the process
$(Z_n(u),\ u\geq 0)$ converge to those of the process $(Z(u,u_*),\ u\geq 0)$.
\end{lemma}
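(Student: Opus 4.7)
The plan is to expand the log-likelihood ratio $\ln Z_n(u)$ explicitly, split its support into a ``singular'' part concentrated in a vanishing neighbourhood of the jump point and a ``smooth'' remainder, show that the smooth part is asymptotically negligible, and then identify the limit of the singular part as a functional of the appropriate switched Poisson process.

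First I would write
$$
\ln Z_n(u)=\sum_{j=1}^n\int_0^\tau \ln\frac{\lambda(t-\vartheta_1-u\varphi_n)}{\lambda(t-\vartheta_1)}\,dX_j(t)-n\int_0^\tau\bigl[\lambda(t-\vartheta_1-u\varphi_n)-\lambda(t-\vartheta_1)\bigr]\,dt,
$$
and observe that the ratio inside the logarithm differs from $1$ only on the short interval $I_n(u)=(t_*+\vartheta_1,\,t_*+\vartheta_1+u\varphi_n)$, where by condition \textbf{S} it equals $\rho+O(\varphi_n)$, together with a smooth contribution on $[0,\tau]\setminus I_n(u)$ of order $\varphi_n$. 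On the singular interval $I_n(u)$ the integrand in the first term is $\ln\rho+O(\varphi_n)$ and the integrand in the second equals $\lambda_--\lambda_++O(\varphi_n)$, so the singular contribution reduces to $\ln\rho\cdot M_n(u)-(\rho-1)u+o(1)$, where $M_n(u)=\sum_j\bigl[X_j(t_*+\vartheta_1+u\varphi_n)-X_j(t_*+\vartheta_1)\bigr]$.

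Next I would dispose of the smooth part. Using Taylor expansion $\ln(1+a_n)=a_n-\frac12 a_n^2+O(a_n^3)$ with $a_n(t)=[\lambda(\vartheta_1+u\varphi_n,t)-\lambda(\vartheta_1,t)]/\lambda(\vartheta_1,t)=O(\varphi_n)$ on the smooth region, the stochastic part has variance $O(n\varphi_n^2)=O(1/n)$ under $\mathbb{P}_{\vartheta_{u_*}}$, and the bias term of order $n\varphi_n^2$ is also $O(1/n)$, since $\varphi_n=1/(n\lambda_+)$. Therefore the smooth part vanishes in probability and
$$
\ln Z_n(u)=\ln\rho\cdot M_n(u)-(\rho-1)u+o_{\mathbb{P}_{\vartheta_{u_*}}}(1).
$$

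It remains to identify the joint limit of $(M_n(u_1),\ldots,M_n(u_k))$ under $\mathbb{P}_{\vartheta_{u_*}}$. Under this measure, each $X_j$ has intensity $\lambda(t-\vartheta_1-u_*\varphi_n)$, which on $I_n(u_k)$ takes the value $\lambda_-$ for $t<t_*+\vartheta_1+u_*\varphi_n$ and $\lambda_+$ for $t\ge t_*+\vartheta_1+u_*\varphi_n$. Rescaling $t=t_*+\vartheta_1+v\varphi_n$, the superposition $M_n(\cdot)$ becomes a Poisson counting process in the variable $v$ with intensity $n\lambda_-\varphi_n=\rho$ for $v<u_*$ and $n\lambda_+\varphi_n=1$ for $v\ge u_*$, which matches exactly the intensity $\mu(v,u_*)$ of the process $x_*(\cdot,u_*)$. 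Since Poisson processes are characterized by their intensity and have independent increments, convergence of finite-dimensional distributions of $M_n$ to those of $x_*(\cdot,u_*)$ follows either directly from convergence of the joint Laplace transforms of the increments or from the classical Grigelionis-type theorem.

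The main technical obstacle will be the uniform control of the smooth remainder: one must check that the bound $\ln(1+a_n)=a_n-a_n^2/2+O(a_n^3)$ is integrable against $n\lambda(\vartheta_{u_*},t)\,dt$ with error vanishing for each fixed $u$, and also verify that the contributions from the endpoints of $I_n(u)$ (where the integrand is neither purely smooth nor exactly $\ln\rho$) are $O(\varphi_n)$ and hence negligible. These are straightforward expansions but require care because one is working under the alternative measure, where the jump of $\lambda(\vartheta_{u_*},\cdot)$ sits at $t_*+\vartheta_{u_*}$ rather than at $t_*+\vartheta_1$; the $O(\varphi_n)$ shift between the two jump locations is what produces the switched limiting intensity $\mu(v,u_*)$.
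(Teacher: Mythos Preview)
Your argument is correct and essentially complete; the remaining ``technical obstacles'' you flag are indeed routine once the interval $I_n(u)$ is split at $t_*+\vartheta_{u_*}$ when $u>u_*$ (or the smooth region is split there when $u<u_*$), exactly as you anticipate in the last paragraph.

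The paper takes a different route. Rather than isolating the counting process $M_n(u)$ and showing the remainder is $o_{\Pb_{\vartheta_{u_*}}}(1)$, it computes the characteristic function $\Ex_{\vartheta_{u_*}}\exp\{i\mu\ln Z_n(u)\}$ directly, writes it as $\exp\bigl\{n\!\int_0^\tau A_n\,dt-i\mu n\!\int_0^\tau B_n\,dt\bigr\}$ with explicit integrands $A_n,B_n$, and then decomposes $[0,\tau]$ into the subintervals determined by the three jump locations of $\lambda_0,\lambda_u,\lambda_{u_*}$, treating the cases $u\le u_*$ and $u>u_*$ separately. The smooth pieces are bounded by $Cu^2/n^2$ and the singular pieces are computed in closed form, yielding the characteristic function of $\ln Z(u,u_*)$.

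Your decomposition is arguably more transparent: once you recognise that $M_n$ is itself an inhomogeneous Poisson process in $u$ with cumulative intensity $n\!\int_{t_*+\vartheta_1}^{t_*+\vartheta_1+u\varphi_n}\lambda(t-\vartheta_{u_*})\,dt\to\int_0^u\mu(v,u_*)\,dv$, the finite-dimensional convergence is immediate from convergence of the Poisson parameters of the independent increments, and Slutsky handles the $o_P(1)$ remainders. The paper's characteristic-function calculation is more mechanical but has the advantage of packaging mean and fluctuation in a single object, so no separate variance estimate is needed. Both approaches ultimately perform the same interval bookkeeping.
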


\begin{proof}
The characteristic function of $\ln Z_n\left(u\right)$ is (see~\cite{Kut84}):
\begin{align*}
& \Ex_{\vartheta_{u_*}} \exp\left\{i\mu\ln Z_n(u)\right\} \\
&\qquad  = \exp \biggl[ n\int_0^\tau \Bigl(\exp \Bigl(i\mu \ln
    \frac{\lambda(t-\vartheta_1-u\varphi_n)}{\lambda(t-\vartheta_1)}\Bigr)-1\Bigr)
    \lambda(t-\vartheta_1-u_*\varphi_n)\, \mathrm{d} t -\\
&\qquad  \qquad -i n\mu \int_0^\tau \bigl(\lambda(t-\vartheta_1-u\varphi_n)
    -\lambda(t-\vartheta_1) \bigr)\, \mathrm{d} t \biggr]\\
&\qquad =\exp\biggl(n\int_0^\tau A_n(u,u_*,t)\,\mathrm{d} t- i n\mu \int_0^\tau
  B_n(u,u_*,t)\,\mathrm{d} t\biggr)
\end{align*}
where we denoted
\begin{align*}
A_n(u,u_*,t)&=\left[\exp \left( i\mu\ln\frac{\lambda _u}{\lambda _0}\right)-1-
  i\mu \ln\frac{\lambda _u}{\lambda _0} \right] \lambda_{u_*},\\
B_n(u,u_*,t)&=\lambda _u-\lambda _0-\lambda _0\ln\frac{\lambda _u}{\lambda
  _0}+\left(\lambda_0-\lambda_{u_*}\right) \ln\frac{\lambda _u}{\lambda _0}
\end{align*}
and $ \lambda _v=\lambda \left(t-\vartheta _1-v\varphi _n\right)$ for $v\geq
0$.

We consider two cases: $u\leq u_*$ and $u>u_*$.  Let $u\leq u_*$ and $0\leq
t\leq t_*+\vartheta _1$. Then the functions $\lambda_0,\lambda_u$ and
$\lambda_{u_*}$ are continuously differentiable and, using Taylor expansion,
we obtain the estimates
$$
\int_{0}^{t_*+\vartheta _1 }\left|A_n\left(u,u_*,t\right)\right|{\rm d}t\leq
\frac{C u^2}{n^2}\ \:\ \text{and}\ \:\ \int_{0}^{t_*+\vartheta _1
}\left|B_n\left(u,u_*,t\right)\right|{\rm d}t\leq \frac{C\left(u^2+u
  u_*\right)}{n^2}.
$$
Similar estimates hold on the interval $t\in\left[t_*+\vartheta _1+u_*\varphi
  _n,\tau \right]$. We have as well the estimates
\begin{align*}
&\int_{t_*+\vartheta _1+u\varphi _n}^{t_*+\vartheta _1+u_*\varphi _n
  }\left|A_n\left(u,u_*,t\right)\right|{\rm d}t\leq
  \frac{C u^2\left(u_*-u\right)}{n^3}\\
\intertext{and}
&\int_{t_*+\vartheta _1+u\varphi _n}^{t_*+\vartheta _1+u_*\varphi _n
}\left|B_n\left(u,u_*,t\right)\right|{\rm d}t\leq \frac{C
  u^2\left(u_*-u\right)}{n^3}+\frac{C u\left(u_*-u\right)}{n^2}.
\end{align*}
So, the main contribution comes from the integrals
\begin{align*}
n\int_{t_*+\vartheta _1}^{t_*+\vartheta _1+u\varphi _n }&\left[ \exp \left(
  i\mu\ln\frac{\lambda _u}{\lambda _0}\right)-1 \right] \lambda_{u_*}{\rm d} t
- i\mu n \int_{t_*+\vartheta _1}^{t_*+\vartheta _1+u\varphi _n }\left[\lambda
  _u-\lambda _0\right]{\rm d}t\\
&= u\left[\exp\left( i\mu \ln \frac{\lambda _-}{\lambda +}
    \right)-1\right]\frac{\lambda _-}{\lambda _+} - i u\mu
  \left[\frac{\lambda_ -}{\lambda _+}-1\right]+o\left(1\right)\\
&\longrightarrow u\left[\exp\left( i\mu \ln\rho \right)-1 \right]\rho
- i u\mu \left[\rho -1\right],
\end{align*}
and we obtain, for $u\leq u_*$,
\begin{align*}
\Ex_{\vartheta_{u_*}} \exp\left\{i\mu\ln Z_n(u)\right\} & \longrightarrow
\exp\biggl\{u\Bigl(\bigl(\exp \left\{ i\mu \ln \rho\right\}-1 \bigr) \rho
-i\mu\left(\rho-1\right) \Bigr)\biggr\}\\
& =\Ex\exp\left\{i\mu\ln Z(u,u_*)\right\}.
\end{align*}

Now we consider the case $u\geq u_*$.  As before, we obtain the convergence,
$$
n\left(\int_0^{t_*+\vartheta_1}+\int_{t_*+\vartheta_1+u\varphi_n}^\tau\right)
\bigl(\left|A_n(u,u_*,t)\right|+\left|B_n(u,u_*,t)\right|\bigr)dt
\longrightarrow 0.
$$
For the intervals $\left(t_*+\vartheta_1, t_*+\vartheta_1+u_*\varphi_n\right)$
and $\bigl(t_*+\vartheta_1+u_*\varphi_n,t_*+\vartheta_1+u\varphi_n\bigr)$, we
can write
\begin{align*}
n\int_{t_*+\vartheta_1}^{t_*+\vartheta_1+u_*\varphi_n}&\bigl(A_n(u,u_*,t)-
i\mu B_n\left(u,u_*,t\right)\bigr)\mathrm{d} t\\
& \longrightarrow \frac{u_*}{\lambda_+} \Bigl(\exp \Bigl( i\mu \ln
\frac{\lambda_-}{\lambda_+}\Bigr)-1\Bigr)\lambda_- -i\mu \bigl(\lambda_-
-\lambda_+ \bigr)\biggr)\\
&= u_*\Bigl(\bigl(\exp \left\{ i\mu \ln \rho\right\}-1 \bigr) \rho
-i\mu\left(\rho-1\right) \Bigr)
\end{align*}
and 
\begin{align*}
n\int_{t_*+\vartheta_1+u_*\varphi_n}^{t_*+\vartheta_1+u\varphi_n}&\bigl(A_n(u,u_*,t)-
i\mu B_n\left(u,u_*,t\right)\bigr) \mathrm{d} t\\
& \longrightarrow\frac{u-u_*}{\lambda_+} \biggl(\Bigl(\exp \Bigl( i\mu \ln
\frac{\lambda_-}{\lambda_+}\Bigr)-1\Bigr)\lambda_+ -i\mu \bigl(\lambda_-
-\lambda_+ \bigr)\biggr) .
\end{align*}
So, for $u>u_*$, we get
\begin{align*}
\Ex_{\vartheta_{u_*}} \exp\left\{i\mu\ln Z_n(u)\right\} & \longrightarrow
\exp\biggl\{u_*\Bigl(\bigl(\exp \left\{ i\mu \ln \rho\right\}-1 \bigr) \rho
-i\mu\left(\rho-1\right) \Bigr) \\
& \qquad\qquad +(u-u_*)\Bigl(\exp \left\{ i\mu \ln \rho\right\}-1 -i\mu
\left(\rho-1 \right)\Bigr) \biggr\}\\
& =\Ex \exp\left\{i\mu\ln Z(u,u_*)\right\}.
\end{align*}
Therefore the one-dimensional distributions of the stochastic process
$Z_n\left(\cdot \right)$ converge to those of $Z\left(\cdot ,u_*\right)$.

The convergence of arbitrary finite-dimensional distributions of
$Z_n\left(\cdot \right)$ to those of $Z\left(\cdot ,u_*\right)$ can be proved
in a similar manner.  For example, in the case of two-dimensional
distributions we can write (for $u_1<u_2<u_*$)
\begin{align*}
\Ex_{\vartheta_{u_*}}&\exp\bigl\{ i\mu _1\ln Z_n(u_1)+ i\mu _2\ln
Z_n(u_2)\bigr\}\\
&\longrightarrow\exp\biggl\{(u_2-u_1)\Bigl[\bigl(\exp\left\{ i\mu _2 \ln
  \rho\right\}-1\bigr)\rho - i\mu _2(\rho-1)\Bigr]\\
&\qquad\qquad +u_1\Bigl[\Bigl(\exp\bigl\{ i\,(\mu _1+\mu _2) \ln
  \rho\bigr\}-1\Bigr)\rho - i\,(\mu _1+\mu _2)\, (\rho-1)\Bigr]\biggr\}\\
&=\Ex\exp\bigl\{ i\mu _1\ln Z(u_1,u_*)+ i\mu _2\ln Z(u_2,u_*)\bigr\}.
\end{align*}
So, the lemma is proved.
\end{proof}

Further, we can write
$$
Z_n(u)=Z_n(u_*)\;\widetilde{Z}_n(u),
$$
where
$$
\widetilde{Z}_n(u)=\frac{\mathrm{d} \Pb_{\vartheta_1+u\varphi_n}\hfill}{\mathrm{d}
  \Pb_{\vartheta_1+u_*\varphi_n}\hfill}\qquad\text{and}\qquad
Z_n(u_*)=\frac{\mathrm{d} \Pb_{\vartheta_1+u_*\varphi_n}\hfill}{\mathrm{d}
  \Pb_{\vartheta_1}\hfill}.
$$
Note that $Z_n\left(u_*\right)$ does not depend of $u$ and we have the
convergence (under the alternative $\vartheta_{u_*}=\vartheta_1+u\varphi_n$)
\begin{equation}
\label{Zstar}
Z_n\left(u_*\right)\Longrightarrow Z_*\left(u_*\right)=\exp\left\{\ln\rho\;
x_\rho\left(u_*\right)-u_*\left(\rho -1\right)\right\},
\end{equation}
where $(x_\rho\left(u\right),\ u\geq 0)$ is a Poisson process of intensity
$\rho$. Therefore, to prove~\eqref{11a}, it is sufficient to study the
convergence of the measures induced by the stochastic process
$(\widetilde{Z}_n(u),\ u\geq 0)$.

\begin{lemma}
Let the conditions \textbf{S} be fulfilled.  Then there exists a constant
$C>0$, such that
\begin{equation}
\label{15}
\Ex_{\vartheta_1+u_*\varphi_n}\bigl|\widetilde{Z}_n^{1/2}(u_1)-\widetilde{Z}_n^{1/2}(u_2)
\bigr|^2\leq C\left|u_1-u_2\right|
\end{equation}
for all $u_*,u_1,u_2\in\mathbb{U}_n^+$.
\end{lemma}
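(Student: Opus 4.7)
The plan is to exploit the fact that, under $\Pb_{\vartheta_1+u_*\varphi_n}^{\otimes n}$, the random variable $\widetilde{Z}_n(u)$ is itself a Radon--Nikodym derivative between two Poisson product measures, so that the left-hand side of \eqref{15} is (twice) the squared Hellinger distance between $\Pb_{\vartheta_1+u_1\varphi_n}^{\otimes n}$ and $\Pb_{\vartheta_1+u_2\varphi_n}^{\otimes n}$ and, in particular, does not depend on $u_*$. Writing $\lambda_{u}(t)=\lambda(t-\vartheta_1-u\varphi_n)$ and using the well-known explicit expression for the Hellinger affinity between two inhomogeneous Poisson laws, I would first obtain
$$\Ex_{\vartheta_1+u_*\varphi_n}\bigl|\widetilde{Z}_n^{1/2}(u_1)-\widetilde{Z}_n^{1/2}(u_2)\bigr|^2 = 2-2\exp\Bigl(-\frac{n}{2}\int_0^\tau \bigl(\sqrt{\lambda_{u_1}(t)}-\sqrt{\lambda_{u_2}(t)}\bigr)^2\,\mathrm{d}t\Bigr).$$

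Using the elementary bound $2(1-e^{-x})\le 2x$ for $x\ge 0$, it then suffices to show that
$$n\int_0^\tau \bigl(\sqrt{\lambda_{u_1}(t)}-\sqrt{\lambda_{u_2}(t)}\bigr)^2\,\mathrm{d}t \le C\,|u_1-u_2|.$$
I would split the integration range at the two jump points $t_*+\vartheta_1+u_i\varphi_n$, $i=1,2$. Outside the interval between them, condition \textbf{S} grants that the relevant restriction of $\lambda$ is continuously differentiable and bounded away from zero, so the mean value theorem yields $|\sqrt{\lambda_{u_1}(t)}-\sqrt{\lambda_{u_2}(t)}|\le C\,|u_1-u_2|\,\varphi_n$. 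This region therefore contributes at most $C\,n\,(u_1-u_2)^2\varphi_n^2$, and since $(u_1-u_2)\varphi_n$ is bounded on $\mathbb{U}_n^+$ by $b-\vartheta_1$, the quantity $n(u_1-u_2)^2\varphi_n^2=\bigl((u_1-u_2)\varphi_n\bigr)(u_1-u_2)(n\varphi_n)$ is in turn bounded by $C'|u_1-u_2|$ thanks to $n\varphi_n=1/\lambda_+$. On the short interval between the two jump points, of length $|u_1-u_2|\varphi_n$, the integrand is uniformly bounded by $(\sqrt{\lambda_+}-\sqrt{\lambda_-})^2$, so this part contributes at most $\frac{n|u_1-u_2|\varphi_n}{2}(\sqrt{\lambda_+}-\sqrt{\lambda_-})^2$, which equals a constant multiple of $|u_1-u_2|$ again by the normalization $\varphi_n=1/(n\lambda_+)$.

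The only delicate point is the bookkeeping on the interval between the two jump locations: for $u_1<u_2$, on $(t_*+\vartheta_1+u_1\varphi_n,t_*+\vartheta_1+u_2\varphi_n)$ the two shifted intensities lie on opposite sides of the jump (one equals $\lambda_-$, the other $\lambda_+$, up to smooth corrections of order $\varphi_n$), so the squared difference of the square roots is genuinely of size $(\sqrt{\lambda_+}-\sqrt{\lambda_-})^2$ and not negligible. This is precisely where the scaling $\varphi_n=(n\lambda_+)^{-1}$ is needed and where the Lipschitz-in-$u$ behaviour of \eqref{15} is sharp (contrary to the cusp case, where the exponent was $2H$); once this contribution is correctly identified, the two estimates combine to give the required bound.
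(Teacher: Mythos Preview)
Your proposal is correct and follows essentially the same route as the paper. The paper invokes \cite[Lemma~1.1.5]{Kut98} to obtain directly the inequality
\[
\Ex_{\vartheta_1+u_*\varphi_n}\bigl|\widetilde Z_n^{1/2}(u_1)-\widetilde Z_n^{1/2}(u_2)\bigr|^2
\le n\int_0^\tau\bigl(\lambda_{u_1}^{1/2}(t)-\lambda_{u_2}^{1/2}(t)\bigr)^2\,\mathrm{d}t,
\]
whereas you first write the exact Hellinger identity and then apply $2(1-e^{-x})\le 2x$; both reductions land on the same integral, and from there the decomposition into the two smooth pieces (handled by the mean value theorem and the bound $|u_1-u_2|\varphi_n\le b-\vartheta_1$) and the short interval between the jump locations (length $|u_1-u_2|\varphi_n$, bounded integrand) is identical to the paper's $I_1+I_3$ and $I_2$. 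One cosmetic point: on the short interval the integrand is only \emph{approximately} $(\sqrt{\lambda_+}-\sqrt{\lambda_-})^2$; for a clean uniform bound use simply that $\lambda$ is bounded, as the paper does.
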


\begin{proof}
According to~\cite[Lemma 1.1.5]{Kut98}, we have (for $v_1>v_2>0$)
\begin{align*}
&\Ex_{\vartheta_1+u_*\varphi_n}\bigl|\widetilde{Z}_n^{1/2}(u_1)-\widetilde{Z}_n^{1/2}(u_2)\bigr|^2\\
&\ \,\leq
  \int_{0}^{n\tau}\left(\frac{\lambda^{1/2}(t-\vartheta_1-u_1\varphi_n)}
      {\lambda^{1/2}(t-\vartheta_1-u_*\varphi_n)}
      -\frac{\lambda^{1/2}(t-\vartheta_1-u_2\varphi_n)}
      {\lambda^{1/2}(t-\vartheta_1-u_*\varphi_n)}\,\right)^2\lambda
      (t-\vartheta_1-u_*\varphi_n)\,\mathrm{d} t\\
&\ \,=n\int_{0}^{\tau}\bigl(\lambda^{1/2}(t-\vartheta _1-u_1\varphi_n)
          -\lambda^{1/2}(t-\vartheta _1-u_2\varphi_n)\,\bigr)^2\,\mathrm{d} t\\
&\ \,=n\left(\int_{0}^{t_*+u_2\varphi_n}+
          \int_{t_*+u_2\varphi_n}^{t_*+u_1\varphi_n}+
          \int_{t_*+u_1\varphi_n}^{\tau}\right) \bigl(\lambda_{u_1}^{1/2}
          -\lambda_{u_2}^{1/2}\,\bigr)^2\,\mathrm{d} t\\
&\ \,=n(I_1+I_2+I_3)
\end{align*}
with obvious notations.

As the functions $\lambda_{u_1} $ and $\lambda_{u_2}$ are continuously
differentiable on the intervals $\left[0,t_*+u_2\varphi_n\right]$ and
$\left[t_*+u_1\varphi_n,\tau\right]$, we can write
$$
\lambda^{\frac12}\left(\vartheta_1+u_1\varphi_n,t\right)-
\lambda^{\frac12}\left(\vartheta_1+u_2\varphi_n,t\right)
=\frac{\left(u_1-u_2\right)\varphi_n}2\,
\frac{\dot\lambda\left(\vartheta_v,t\right)}
     {\lambda^{\frac12}\left(\vartheta_v,t\right)}\,,
$$
where $v$ is some intermediate point between $u_1$ and $u_2$. Therefore
\begin{align*}
n\left(I_1+I_3\right) &\leq
n\varphi_n^2\frac{(u_1-u_2)^2}4\left(\int_{0}^{t_*+u_2\varphi_n}+\int_{t_*+u_1\varphi_n}^{\tau}\right)
\frac{\dot\lambda\left(\vartheta_v,t\right)^2}{\lambda\left(\vartheta_v,t\right)}\,\mathrm{d}
t\\
& \leq \frac{ C}{n\lambda_+^2}\left|u_1-u_2\right|^2 \leq {C}
\left|u_1-u_2\right|
\end{align*}
where we took into account the inequality $\left|u_1-u_2\right|\leq C n$.

Since the function $\lambda$ is bounded, we have the estimate
$$
n I_2 \leq
n\frac{\left|u_1-u_2\right|}{n\lambda_+}C=\frac{C}{\lambda_+}\left|u_1-u_2\right|,
$$
and so the inequality \eqref{15} holds with some constant $C>0$.
\end{proof}

\begin{lemma}
Let the condition \textbf{S} be fulfilled. Then there exists a constant
$k^*>0$ such that
\begin{equation}
\label{16}
\Ex_{\vartheta_1+u_*\varphi_n}\widetilde{Z}_n^{1/2}(u)\leq\exp\bigl\{-k^*\left|u-u_*\right|\bigr\}
\end{equation}
for all $u^*,u\in\mathbb{U}_n^+$.
\end{lemma}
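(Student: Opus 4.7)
The plan is to apply the Hellinger-integral identity for inhomogeneous Poisson observations (standard for this model, see~\cite[Chap.~1]{Kut98}), which gives
$$
\Ex_{\vartheta_1+u_*\varphi_n}\widetilde{Z}_n^{1/2}(u)
 = \exp\!\left\{-\frac{n}{2}\,I_n(u,u_*)\right\},
\qquad
I_n(u,u_*)=\int_0^\tau\!\bigl(\sqrt{\lambda_u(t)}-\sqrt{\lambda_{u_*}(t)}\bigr)^2\,\mathrm{d}t,
$$
with $\lambda_v(t)=\lambda(t-\vartheta_1-v\varphi_n)$. The lemma then reduces to showing a linear lower bound $n\,I_n(u,u_*)\geq 2k^*|u-u_*|$ for some $k^*>0$ independent of $n$, $u$, $u_*$.

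Assume without loss of generality that $u<u_*$. Split $[0,\tau]$ at $t_*+\vartheta_1+u\varphi_n$ and $t_*+\vartheta_1+u_*\varphi_n$. The two outer intervals give a non-negative contribution which I simply drop. The main contribution comes from the ``jump interval'' $J_n=[t_*+\vartheta_1+u\varphi_n,\,t_*+\vartheta_1+u_*\varphi_n]$ of length $(u_*-u)\varphi_n$. On $J_n$, the function $\lambda_u$ has already jumped (its argument lies in $(t_*,t_*+(u_*-u)\varphi_n)$), while $\lambda_{u_*}$ has not (its argument lies in $(t_*-(u_*-u)\varphi_n,t_*)$). In the microscopic regime $(u_*-u)\varphi_n\leq\eta$, with $\eta>0$ fixed small enough that $\lambda$ stays close to $\lambda_-$ on $(t_*-\eta,t_*)$ and close to $\lambda_+$ on $(t_*,t_*+\eta)$, the integrand on $J_n$ is at least $\tfrac{1}{2}(\sqrt{\lambda_+}-\sqrt{\lambda_-})^2$. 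Using $\varphi_n=1/(n\lambda_+)$ this yields
$$
n\,I_n(u,u_*)\;\geq\;\frac{(\sqrt{\lambda_+}-\sqrt{\lambda_-})^2}{2\lambda_+}\,(u_*-u),
$$
the desired bound in this regime.

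The main obstacle is the complementary macroscopic regime $(u_*-u)\varphi_n>\eta$: there the integrand on the whole of $J_n$ is no longer uniformly close to $(\sqrt{\lambda_+}-\sqrt{\lambda_-})^2$, because $\lambda_u$ and $\lambda_{u_*}$ are evaluated at points that may lie far from $t_*$. I would handle this by restricting the analysis to a sub-interval of $J_n$ of fixed length $\eta$ adjacent to its left endpoint, where by the same one-sided continuity argument $\lambda_u$ is still close to $\lambda_+$ and $\lambda_{u_*}$ still close to $\lambda_-$. This produces a uniform positive lower bound $I_n(u,u_*)\geq c_\eta>0$ in this regime. Combined with the inclusion $|u-u_*|\leq n\lambda_+(b-\vartheta_1)$ built into the definition of $\mathbb{U}_n^+$, one has $n\,I_n(u,u_*)\geq n c_\eta\geq \tfrac{c_\eta}{\lambda_+(b-\vartheta_1)}\,|u-u_*|$, which is again a linear bound. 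Taking $k^*$ to be the minimum of the two constants produced by the two regimes completes the proof.
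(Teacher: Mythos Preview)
Your microscopic regime is handled correctly and matches the paper: the Hellinger identity, the restriction to the jump interval $J_n$, and the resulting bound $n\,I_n(u,u_*)\geq c\,|u-u_*|$ with $c=(\sqrt{\lambda_+}-\sqrt{\lambda_-})^2/(2\lambda_+)$ are exactly what the paper does.

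The macroscopic regime, however, has a genuine gap. On your sub-interval of length $\eta$ adjacent to the left endpoint $t_*+\vartheta_1+u\varphi_n$ of $J_n$, it is true that the argument of $\lambda_u$ lies in $[t_*,t_*+\eta]$, so $\lambda_u\approx\lambda_+$. But the argument of $\lambda_{u_*}$ lies in $[t_*-(u_*-u)\varphi_n,\;t_*-(u_*-u)\varphi_n+\eta]$, and since $(u_*-u)\varphi_n>\eta$ this interval can sit \emph{arbitrarily far to the left} of $t_*$. Condition~\textbf{S} gives you no information about $\lambda$ there beyond continuity and boundedness; in particular nothing prevents $\lambda$ from taking the value $\lambda_+$ at such points, in which case the integrand $(\sqrt{\lambda_u}-\sqrt{\lambda_{u_*}})^2$ is near zero on your whole sub-interval and the claimed lower bound $I_n(u,u_*)\geq c_\eta>0$ fails. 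The symmetric attempt (a sub-interval near the right endpoint, where $\lambda_{u_*}\approx\lambda_-$) has the same defect with the roles reversed.

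The paper avoids this by a global identifiability argument rather than a local one: it sets
\[
g(\nu)=\inf_{|s-s_0|\geq\nu}\int_0^\tau\bigl(\sqrt{\lambda(t-\vartheta_1-s)}-\sqrt{\lambda(t-\vartheta_1-s_0)}\bigr)^2\,\mathrm{d}t
\]
and observes that $g(\nu)>0$ for every $\nu>0$, because equality of two shifted copies of the discontinuous function $\lambda$ on all of $[0,\tau]$ would force the jump locations to coincide. Then in the macroscopic regime $|u-u_*|\varphi_n\geq\delta$ one has $I_n(u,u_*)\geq g(\delta)>0$, and the bound $|u-u_*|\leq Cn$ converts this into $nI_n(u,u_*)\geq (g(\delta)/C)\,|u-u_*|$. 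Replacing your local sub-interval argument by this identifiability step closes the gap.
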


\begin{proof}
According to~\cite[Lemma 1.1.5]{Kut98}, we have
\begin{align*}
&\Ex_{\vartheta_1+u_*\varphi_n}\widetilde{Z}_n^{1/2}(u)\\
&\qquad =\exp\Bigl\{-\frac{n}{2}\int_0^{\tau}
  \Bigl(\frac{\lambda^{1/2}(t-\vartheta_1-u\varphi_n)}{\lambda^{1/2}(t-\vartheta_1-u_*\varphi_n)}
  -1\,\Bigr)^2\,\lambda(t-\vartheta_1-u_*\varphi_n)\,\mathrm{d} t\Bigr\}\\
&\qquad
  =\exp\Biggl\{-\frac{n}{2}\,\int_0^{\tau}\Bigl(\lambda^{1/2}\bigl(t-\vartheta_1-u\varphi_n\bigr)
  -\lambda^{1/2}(t-\vartheta_1-u_*\varphi_n)\,\Bigr)^2\,\mathrm{d} t\Biggr\}\\
&\qquad =\exp \left\{-\frac{n}{2}\,F_n(u,u_*)\right\}
\end{align*}
with an obvious notation.

Let us consider separately the cases $u\in
D=\left\{v:\;\left|v-u_*\right|<\delta n\lambda _+\right\}$ and $u\in
D^c=\left\{v:\; \left|v-u_*\right|\geq \delta n\lambda _+\right\}$.  Here
$\delta$ is some positive constant which will be chosen later. For simplicity
we suppose that $u>u_*$.

For the values $u\in D$ we have
\begin{align*}
n F_n(u,u_*)&\geq n
\int_{t_*+\vartheta_1+u_*\varphi_n}^{t_*+\vartheta_1+u\varphi_n}
\left[\lambda^{1/2}\bigl(t-\vartheta_1-u\varphi_n\bigr)
  -\lambda^{1/2}(t-\vartheta_1-u_*\varphi_n)\right]^2\,\mathrm{d} t\\
&\geq \frac{\left|u-u_*\right|}{\lambda _+}\inf_{t_*+u_*\varphi _n\leq s\leq
  t_*+u\varphi _n } \left[\lambda^{1/2}\bigl(s-u\varphi_n\bigr)
  -\lambda^{1/2}(s-u_*\varphi_n)\right]^2\\
&\geq \frac{\left|u-u_*\right|}{2\lambda _+}
\left(\sqrt{\lambda_-}-\sqrt{\lambda_+}\right)^2=\frac{\left|u-u_*\right|}{2}
\left(\sqrt{\rho }-1\right)^2
\end{align*}
for sufficiently small $\delta $.

Further, note that for any $\nu >0$ we have
$$
g\left(\nu \right)=\inf_{\left|s-s_0\right|>\nu } \int_{0}^{\tau
}\left[\sqrt{\lambda \left(t-\vartheta _1-s\right)}-\sqrt{\lambda
    \left(t-\vartheta _1-s_0\right)}\right]^2{\rm d}t >0.
$$
Indeed, if $g\left(\nu \right)=0 $ then for some $s_*$ we have $\lambda
\left(t-\vartheta _1-s_*\right)=\lambda \left(t-\vartheta _1-s_0\right) $ for
all $t\in \left[0,\tau \right]$, but this equality for discontinuous $\lambda
\left(\cdot \right)$ and all $t$ is impossible. Hence, for the values $u\in
D^c$ we have
$$
n F_n\left(u,u_*\right)\geq n g\left(\delta \right)\geq \frac{g\left(\delta
  \right) \left|u-u_*\right|}{C}
$$
where we took into account the inequality $\left|u-u_*\right|\leq C n$.

So, the inequality \eqref{16} is proved.
\end{proof}

The presented estimates \eqref{15}, \eqref{16} and Lemma \ref{L1} allow us to
finish the proof following the same lines as it was done in \cite{Kut84},
Section 5.4.3.

\subsection{GLRT}

The GLRT is based on the statistic
$$
Q_n\left(X^n\right)=\sup_{\vartheta \geq \vartheta
  _1}L\left(\vartheta,\vartheta_1,X^n\right) =
\max\left[L\left(\hat\vartheta_n+,\vartheta_1,X^n\right),\,
  L\left(\hat\vartheta_n-,\vartheta_1,X^n\right)\right]
$$
(where $\hat\vartheta _n$ is the MLE) and is of the form
$$
\hat\psi _n\left(X^n\right)=\1_{\left\{Q_n\left(X^n\right)> h_\varepsilon
  \right\}}.
$$
The threshold $h_\varepsilon $ is defined with the help of the convergence
(under hypothesis $\mathscr{ H}_1$)
$$
Q_n\left(X^n\right)=\sup_{u\in\mathbb{U}_n^{+}}Z_n\left(u\right)\Longrightarrow
\sup_{u> 0}Z\left(u\right)=\hat Z.
$$
Hence $h_\varepsilon=h_\varepsilon\left(\rho \right) $ is solution of the
equation
$$
\Pb\left\{\hat Z>h_\varepsilon \right\}=\varepsilon.
$$

Let us fix an alternative $\vartheta_{u_*}=\vartheta _1+u_*\varphi_n$,
$u_*>0$. Then for the power function we have
\begin{align*}
\beta \left(\hat\psi_n,u_*\right)=\Ex_{\vartheta _{u_*}}
\hat\psi_n\left(X^n\right)&=\Pb_{\vartheta _{u_*}}
\left\{\sup_{u>0}Z_n\left(u\right)>h_\varepsilon \right\}\\
&\longrightarrow \Pb\left\{\sup_{u>0}Z\left(u,u_*\right)>h_\varepsilon
\right\}.
\end{align*}
Putting $Y\left(u\right)=\ln Z\left(u,u_*\right)=\ln \rho\,
x_*\left(u,u_*\right)-\left(\rho -1\right)u$, we can write
$$
\sup_{u>0} Y(u) =\max\left( \sup_{0<u<u_*}Y\left(u\right),\:Y\left(u_*\right)+
\sup_{u\geq u_*}\left[Y\left(u\right)-Y\left(u_*\right)\right] \right).
$$
Note that the Poisson process $\tilde
x\left(v\right)=x_*\left(u_*+v,u_*\right)-x_*\left(u_*,u_*\right)$, $v\geq 0$,
is independent from $(x_*\left(u,u_*\right),\ 0\leq u\leq u_*)$. Hence we can
write the following representation of the limit power function:
$$
\beta \left(\hat\psi_n,u_*\right)\longrightarrow
\Pb\left\{\max\left(\sup_{0<u<u_*}Z_*\left(u\right),\:
Z_*\left(u_*\right)\,\tilde Z \right)>h_\varepsilon \right\},
$$
where the random variable $\tilde Z=\sup\limits_{v\geq 0} \exp\left\{{\ln \rho
  \,\tilde x_*\left(v\right)}-\left(\rho -1\right)v\right\}$ is independent
from $(Z_*\left(u\right),\ 0\leq u\leq u_*)$, and the process
$Z_*\left(\cdot\right)$ is defined as in~\eqref{Zstar}. Let us note that this
expression is useful for numerical simulation of the power function. It
simplifies the calculations since the simulated values of $\tilde Z$ can be
used many times for different values of $u_*$.

\subsection{Wald test}

The Wald test is based on the MLE $\hat\vartheta _n$. We already know that
$$
\varphi_n^{-1}\left(\hat\vartheta _n-\vartheta _1\right)\Longrightarrow \hat
u,
$$
where $\hat u$ is defined by the equation
$$
\max\left[Z\left(\hat u+\right),Z\left(\hat u-\right)\right]=\sup_{u>0}
Z\left(u\right).
$$
The Wald test is
$$
\psi_n^\circ\left(X^n\right)=\1_{\left\{\varphi_n^{-1}\left(\hat\vartheta
  _n-\vartheta _1\right)>g_\varepsilon \right\}},
$$
where the threshold $g_\varepsilon =g_\varepsilon \left(\rho \right)$ is
solution of the equation
$$
\Pb\left\{\hat u>g_\varepsilon \right\}=\varepsilon.
$$

For the power function we have (below $\vartheta_{u_*}=\vartheta
_1+u_*\varphi_n$)
\begin{align*}
\beta \left(\psi _n^\circ,u_*\right)&=\Ex_{\vartheta_{u_*}}\psi_n^\circ
\left(X^n\right)=\Pb_{\vartheta_{u_*}} \left\{\varphi_n^{-1}\left(\hat\vartheta
_n-\vartheta _1\right)>h_\varepsilon \right\}\\
&=\Pb_{\vartheta_{u_*}} \left\{\sup_{\varphi_n^{-1}\left(\theta -\vartheta
  _1\right)>h_\varepsilon } L\left(\theta ,X^n\right) >
\sup_{\varphi_n^{-1}\left(\theta -\vartheta _1\right)\leq h_\varepsilon }
L\left(\theta ,X^n\right) \right\}\\
&=\Pb_{\vartheta_{u_*}} \left\{\sup_{\varphi_n^{-1}\left(\theta -\vartheta
  _1\right)>h_\varepsilon }\frac{ L\left(\theta ,X^n\right)}{ L\left(\vartheta
  _1 ,X^n\right) } > \sup_{\varphi_n^{-1}\left(\theta -\vartheta _1\right)\leq
  h_\varepsilon } \frac{L\left(\theta ,X^n\right) }{ L\left(\vartheta _1
  ,X^n\right) } \right\}\\
&=\Pb_{\vartheta_{u_*}} \left\{\sup_{u>h_\varepsilon }Z_n\left(u\right) >
\sup_{u\leq h_\varepsilon }Z_n\left(u\right) \right\}\\
 &\longrightarrow \Pb\left\{\sup_{u>h_\varepsilon }Z\left(u,u_*\right) >
\sup_{u\leq h_\varepsilon }Z\left(u,u_*\right) \right\}=\Pb \left\{\hat
u^*>h_\varepsilon \right\},
\end{align*}
where the random variable $\hat u^*$ is defined by the equation
$$
\max \left[Z\left(\hat u^*+,u\right),Z\left(\hat
  u^*-,u\right)\right]=\sup_{u\geq 0 }Z\left(u,u_*\right).
$$

Let us note, that we can also give another representation of the power
function using the limit (under the alternative $\vartheta_{u_*}$) of the
normalized likelihood ratio
$L(\vartheta_{u_*}+u\varphi_n,\vartheta_{u_*},X^n)$, $u\geq -u_*$. This limit
is the stochastic process $(Z^\star\left(u\right),\ u\geq -u_*)$ defined by
$$
Z^\star\left(u\right)=\begin{cases}
\exp\bigl\{\ln\rho\; x_*\left(u\right)-\left(\rho -1\right)u\bigr\}& \text{if
}u\geq 0,\vphantom{\Big(}\\
\exp\bigl\{-\ln\rho\; x_\rho\bigl((-u)-\bigr)-\left(\rho -1\right)u\bigr\}&
\text{if }-u_*\leq u\leq 0\vphantom{\Big(},
\end{cases}
$$
where $(x_*\left(u\right),\ u\geq 0)$ and $(x_\rho\left(u\right),\ u\geq 0)$
are Poisson processes of unit intensity and of intensity $\rho$
respectively. Note that in~\cite{Kut84} this limit was established for a fixed
value $\vartheta$ but, taking into account Section~\ref{SecWC}, it clearly
holds for ``moving'' value $\vartheta_{u_*}$.  Note also that the positive
real axis part $(Z^\star\left(u\right),\ u\geq 0)$ of the process
$Z^\star(\cdot)$ is nothing but the process $Z\left(\cdot\right)$.

Now, we have
\begin{align*}
\beta \left(\psi_n^\circ,u_*\right)=\Ex_{\vartheta _{u_*}}\psi
_n^\circ\left(X^n\right)&=\Pb_{\vartheta _{u_*}}
\left\{\varphi_n^{-1}\left(\hat\vartheta _n-\vartheta
_{u_*}\right)+u_*>h_\varepsilon \right\}\\
&\longrightarrow \Pb\left\{\hat u_*>h_\varepsilon-u_*\right\}
\end{align*}
where $\hat u_*$ is defined by the equation
$$
\max\left[Z^\star\left(\hat u_*+\right),Z^\star\left(\hat
  u_*-\right)\right]=\sup_{u\geq -u_*} Z^\star\left(u\right).
$$

\subsection{Bayes tests}

Suppose that the parameter $\vartheta $ is a random variable with known
probability density $p\left(\theta \right)$, $\vartheta _1\leq \theta
<b$. This function is supposed to be continuous and positive.

We consider two Bayes tests. The first one is based on the Bayes estimator,
while the second one is based on the averaged likelihood ratio.

\bigskip
\bigskip

The first test, which we call BT1, is similar to WT, but is based on the Bayes
estimator (BE) $\tilde\vartheta _n $ rather than on the MLE:
$$
\tilde\psi_n\left(X^n\right)=\1_{\left\{\varphi_n^{-1}\left(\tilde\vartheta
  _n-\vartheta _1\right) >k_\varepsilon \right\}},\qquad\tilde\vartheta
_n=\frac{\int_{\vartheta_1}^{b }\theta
  p\left(\theta\right)L\left(\theta,\vartheta_1,X^n\right){\rm
    d}\theta}{\int_{\vartheta_1}^{b }\theta
  L\left(\theta,\vartheta_1,X^n\right){\rm d}\theta}.
$$
The properties of the likelihood ratio established in Lemmas 1--3 allow us to
justify the limit
$$
\Ex_{\vartheta _1}\tilde\psi_n\left(X^n\right)\longrightarrow \Pb\left\{\tilde
u>k_\varepsilon \right\},\qquad \tilde u=\frac{\int_{0}^{\infty
  }v Z\left(v\right){\rm d}v}{\int_{0}^{\infty }Z\left(v\right){\rm d}v}.
$$
The proof follows from the general results concerning the Bayes estimators
described in~\cite{IH81} (see as well~\cite{Kut84}).

For the power function, using the convergence (under the alternative
$\vartheta_{u_*}$) of the process $Z_n\left(\cdot\right)$ to the process
$Z\left(\cdot,u_*\right)$, we obtain
$$
\beta \left(\tilde\psi _n,u_*\right)=\Pb_{\vartheta
  _{u_*}}\left\{\varphi_n^{-1}\left(\tilde\vartheta _n-\vartheta _1\right)
>k_\varepsilon \right\}\longrightarrow\Pb\left\{\frac{\int_{0}^{\infty }v
  Z\left(v,u_*\right){\rm d}v}{\int_{0}^{\infty }Z\left(v,u_*\right){\rm d}v}
>k_\varepsilon \right\}.
$$

Let us note, that we can also give another representation of the limit power
function using the process $Z^\star\left(\cdot\right)$. We have the
convergence (under the alternative $\vartheta_{u_*}$)
$$
\varphi_n^{-1}\left(\tilde\vartheta _n-\vartheta
_1\right)=\varphi_n^{-1}\left(\tilde\vartheta _n-\vartheta
_{u_*}\right)+u_*\Longrightarrow \frac{\int_{-u_*}^{\infty }v
  Z^\star\left(v\right){\rm d}v}{\int_{-u_*}^{\infty
  }Z^\star\left(v\right){\rm d}v}+u_*.
$$
Hence
$$
\beta \left(\tilde\psi _n,u_*\right)\longrightarrow\Pb\left\{
\frac{\int_{-u_*}^{\infty }v Z^\star\left(v\right){\rm
    d}v}{\int_{-u_*}^{\infty }Z^\star\left(v\right){\rm d}v}
>k_\varepsilon-u_*\right\}.
$$

\bigskip
\bigskip

The second test, which we call BT2, is the test which minimizes the mean error
of the second kind. We have
$$
\varphi_n^{-1}\tilde L\left(X^n\right)=\varphi_n^{-1}\int_{\vartheta
  _1}^{b}p\left(\theta \right)\frac{L\left(\theta
  ,X^n\right)}{L\left(\vartheta_1 ,X^n\right) }\,{\rm d}\theta \Longrightarrow
p\left(\vartheta _1\right)\int_{0}^{\infty } Z\left(v\right)\;{\rm d}v.
$$
Hence the test
$$
\tilde\psi_n^\star\left(X^n\right)=\1_{\left\{R_n>m_\varepsilon
  \right\}},\qquad R_n=\frac{\tilde L\left(X^n\right) }{\varphi_n\;
  p\left(\vartheta _1\right)}\,,
$$
with threshold $m_\varepsilon $ satisfying the equation
$$
\Pb\left\{ \int_{0}^{\infty } Z\left(v\right){\rm d}v >m_\varepsilon
\right\}=\varepsilon
$$
belongs to the class $\mathcal{ K}_\varepsilon$ and minimizes the mean error
of the second kind.

\subsection{Simulations}

We consider $n$ independent realizations $X_j=(X_j(t),\ t \in
\left[0,4\right])$, $j=1,\ldots,n$, of a Poisson process of intensity function
$$
\lambda(t,\vartheta)=\lambda(t-\vartheta)=3-2\cos^2(t-\vartheta)\1_{\left\{t\geq
  \vartheta\right\}},\qquad 0\leq t\leq 4.
$$
We take $\vartheta _1=3$ and $b=4$, and therefore $\lambda _+=1$, $\lambda
_-=3$ and $\rho =\frac{\lambda _-}{\lambda _+}=3$. The log-likelihood ratio is
\begin{align*}
\ln Z_n(u)&=\sum_{j=1}^n\int_{3}^{3+u/n} \ln \frac3{3-2\cos^2(t-3)}\,{\rm
  d}X_j(t)\\
&\qquad+\sum_{j=1}^n\int_{3+u/n}^4 \ln
\frac{3-2\cos^2(t-3-u/n)}{3-2\cos^2(t-3)}\,{\rm d}X_j(t)\\
&\qquad-u-\frac{n}2 \sin\bigl(2\bigr)+\frac{n}2 \sin\bigl(2(1-u/n)\bigr).
\end{align*}

Recall that in this case the limit (under $\mathscr{H}_1$) of the likelihood
ratio is
$$
Z\left(u\right)=\exp\left\{\ln 3\; x_*\left(u\right)-2u\right\},
$$
where $(x_*\left(u\right),\ u\geq 0)$ is a Poisson process of unit
intensity. A realization of this limit likelihood ratio $\bigl($or, more
precisely, of the logarithm of its two-sided version
$Z^\star\left(\cdot\right)\bigr)$ and its zoom are given in
Figure~\ref{fct_t_dis}.

\vbox{
\bigskip
\hrule
\smallskip
Here Figure~\ref{fct_t_dis}.
\hrule
\bigskip
}

Using this limit we obtain the threshold $h_\varepsilon $ of the GLRT as
solution of the equation
$$
\Pb\left\{\sup_{u> 0}Z(u)>h_\varepsilon \right\}=\varepsilon.
$$

It is convenient for simulations to transform the limit process as follows:
$$
\exp\left[\ln 3\left(x_*\left(u\right)-\frac2{\ln 3}u\right)\right]
=\exp\left\{\ln 3\left[\Pi \left(\frac2{\ln 3}u\right)-\frac2{\ln
    3}u\right]\right\}
$$
where $\Pi \left(\cdot \right)$ is a Poisson process of intensity $\gamma
=\frac{\ln 3}2<1$.  Hence, the threshold~$h_\varepsilon$ is determined by the
equation
$$
\Pb\left\{\sup_{t> 0}\left[\Pi \left(t\right)-t\right]> \frac{\ln
  h_\varepsilon}{\ln 3} \right\}=\varepsilon .
$$
The distribution of $\sup\limits_{t>0}\left[\Pi\left(t\right)-t\right]$ is
given by the well-known formula
$$
\Pb\left\{\sup_{t> 0}\left[\Pi\left(t\right)-t\right]\geq
x\right\}=\sum_{m>x}\frac{(m-x)^m}{m!}(\gamma e^{-\gamma})^m e^{\gamma
  x}(1-\gamma)
$$
obtained by Pyke in~\cite{Pyke59}.

Note that there is equally an analytic expression for the distribution of the
random variable $\hat t=\argmax_{t\geq 0}\left[\Pi\left(t\right)-t\right]
$. This expression was obtained by Pflug in~\cite{Pflug93} and is given by
$$
\Pb\left\{\hat t<z\right\}=\Pb\left\{\sum_{k=1}^{\nu }\eta _k <z\right\},
$$
where $\left\{\eta_k\right\}_{k\in\mathbb{N}^*}$ is an i.i.d.\ sequence with
common distribution
$$
\Pb\left\{\eta_k\leq x\right\}=\frac1{\gamma}\left[1-(1-\gamma)e^{-\gamma
    x}\sum_{j=0}^{[x]-1}\frac{(\gamma x)^j}{j\,!}-e^{-\gamma x}\frac{(\gamma
    x)^{[x]}}{[x]\,!}\right],
$$
$\nu$ is a random variable independent of $\eta _k$, $k\in\mathbb{N}^*$, and
distributed according to geometric law
$$
\Pb\left\{\nu=i\right\}=(1-\gamma)\gamma^i,\quad i\in\mathbb{N},
$$
and we use the convention $\sum_{k=1}^0\eta_k=0$. Now, for the threshold
$g_\varepsilon$ of the WT we can write
$$
\Pb\left\{\hat u>g_\varepsilon \right\}=\Pb\left\{\argmax_{t\geq
  0}\left[\Pi\left(t\right)-t\right]>\frac{\ln 3}2
g_\varepsilon\right\}=\varepsilon.
$$
However, the numerical solution of this equation is not easy, and it is
simpler to obtain the threshold $g_\varepsilon$ by Monte Carlo simulations.

The thresholds are presented in Table~\ref{Thr_dis}.

\begin{table}[htb]
\begin{center}
\begin{tabular}{|c|c|c|c|c|c|c|}
 \hline
 $\varepsilon$ & 0.01 & 0.05 & 0.10 & 0.20 & 0.40 & 0.50 \\
 \hline
 $\ln h_\varepsilon$ &4.242 & 2.607 & 1.922 & 1.120 & 0.573 & 0.191 \\
 \hline
 $g_\varepsilon$ & 5.990 & 3.556 & 2.078 & 1.045 & 0.329 & 0.099 \\
 \hline
 $k_\varepsilon$ & 6.669 & 3.937 & 2.983 & 2.132 & 1.402 & 1.196\\
 \hline
\end{tabular}
\caption{\label{Thr_dis}Thresholds of GLRT, WT and BT1}
\end{center}
\end{table}

\vbox{
\bigskip
\hrule
\smallskip
Here Figure~\ref{PF_dis}.
\hrule
\bigskip
}

It is interesting to compare the studied tests with the Neyman-Pearson test
(N-PT) corresponding to a fixed value $u_*$ of $u$. Of course, it is
impossible to use this N-PT in our initial problem, since $u_*$ (the value of
$u$ under alternative) is unknown. Nevertheless, its power (as function of
$u_*$) shows an upper bound for power functions of all the tests, and the
distances between it and the power functions of studied tests provide an
important information. Let us fix some value $u_*>0$ and introduce the N-PT
$$
\psi_n^*\left(X^n\right)=\1_{\left\{Z_n(u_*)>
  d_\varepsilon\right\}}+q_\varepsilon\1_{\left\{Z_n(u_*)=
  d_\varepsilon\right\}},
$$
where $d_\varepsilon$ and $q_\varepsilon$ are solutions of the equation
$$
\Pb\left(Z\left(u_*\right)>
d_\varepsilon\right)+q_\varepsilon\Pb\left(Z\left(u_*\right)=
d_\varepsilon\right)=\varepsilon.
$$
Denoting $D_\varepsilon=\left(\ln d_\varepsilon+(\rho-1)u_*\right)/\ln\rho$,
we can rewrite this equation as
$$
\Pb\left(x_*(u_*)>D_\varepsilon\right)
+q_\varepsilon\Pb\left(x_*(u_*)=D_\varepsilon\right)=\varepsilon.
$$
Here $x_*\left(u_* \right)$ is a Poisson random variable with parameter $u_*$,
and so the quantities $D_\varepsilon $ and $q_\varepsilon $ can be computed
numerically.

A similar calculation yields the limit power of the N-PT:
$$
\beta\left(\psi_n^*,u_*\right)\longrightarrow
\Pb\left(x_*(u_*,u_*)>D_\varepsilon\right)
+q_\varepsilon\Pb\left(x_*(u_*,u_*)=D_\varepsilon\right).
$$
where $x_*\left(u_*,u_*\right)$ is a Poisson random variable with parameter
$\rho u_*$.

\vbox{
\bigskip
\hrule
\smallskip
Here Figure~\ref{PF_dis_rho_3}.
\hrule
\bigskip
}

The results of simulations are presented in Figure~\ref{PF_dis_rho_3} for two
cases: $\varepsilon =0.05$ and $\varepsilon=0.4$. In both cases the limit
power function of the GLRT is the closest one to the limit power of the N-PT,
and the limit power function of the BT1 arrives faster to $1$ than the others.

\section{Acknowledgements}

This study was partially supported by Russian Science Foundation (research
project No. 14-49-00079). The authors thank the Referee for helpful comments.

\begin{figure}[htb]
\begin{center}
\includegraphics[width=\textwidth]{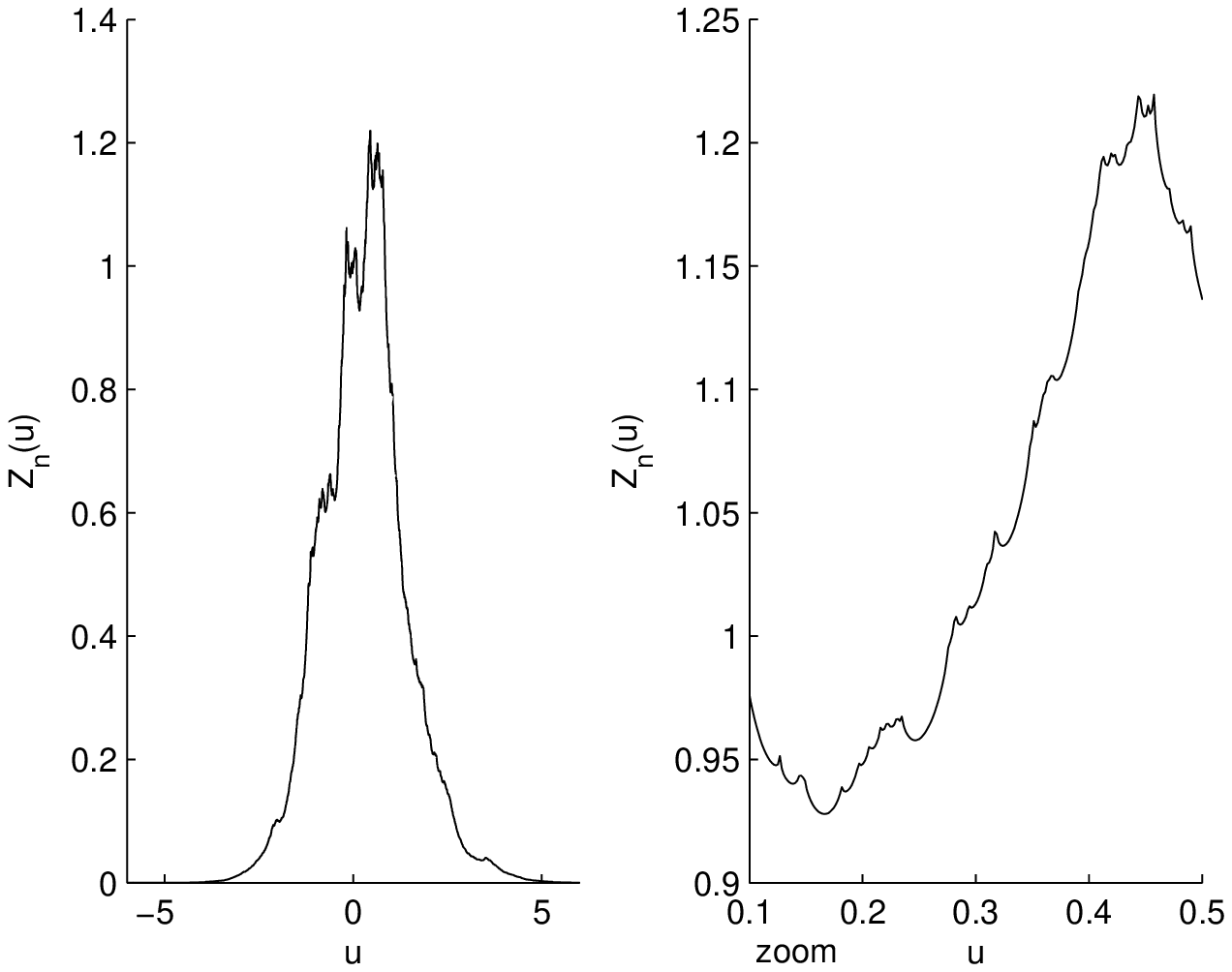}
\caption{\label{Z_n_u_cusp}A realization of $Z_n(\cdot)$ with
  $\lambda\left(\vartheta,t\right)=2-\left|t-1.5\right|^{0.4}$ and $n=1000$}
\end{center}
\end{figure}

\begin{figure}[htb]
\begin{center}
\includegraphics[width=\textwidth]{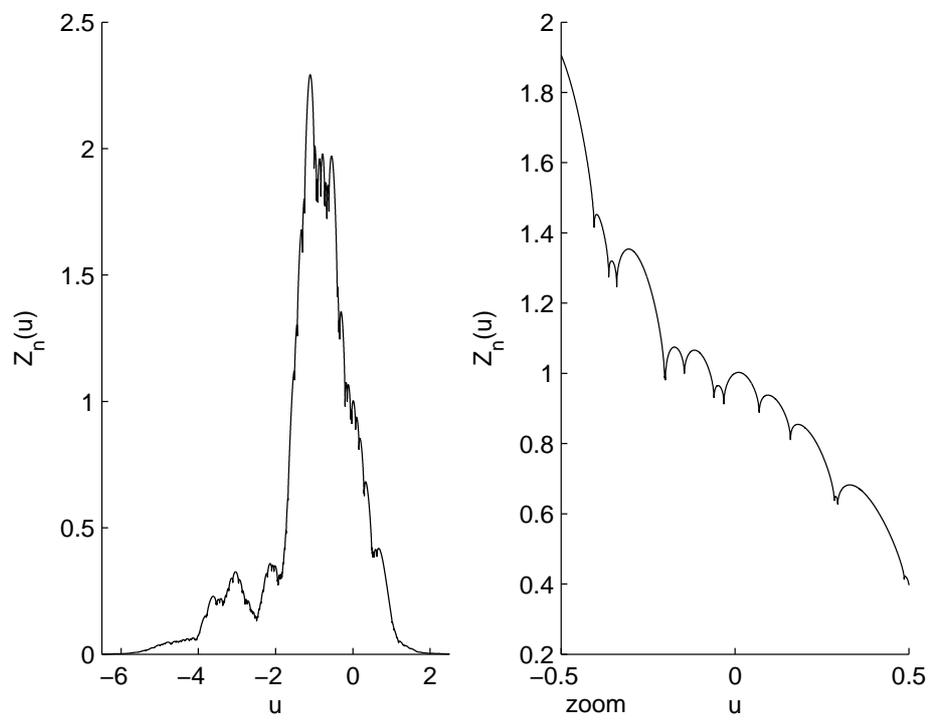}
\caption{\label{Z_n_u_cusp_invers}A realization of $Z_n(\cdot)$ with
  $\lambda\left(\vartheta,t\right)=0.5+\left|t-1.5\right|^{0.4}$ and $n=1000$}
\end{center}
\end{figure}

\begin{figure}[htb]
\begin{center}
\includegraphics[width=\textwidth]{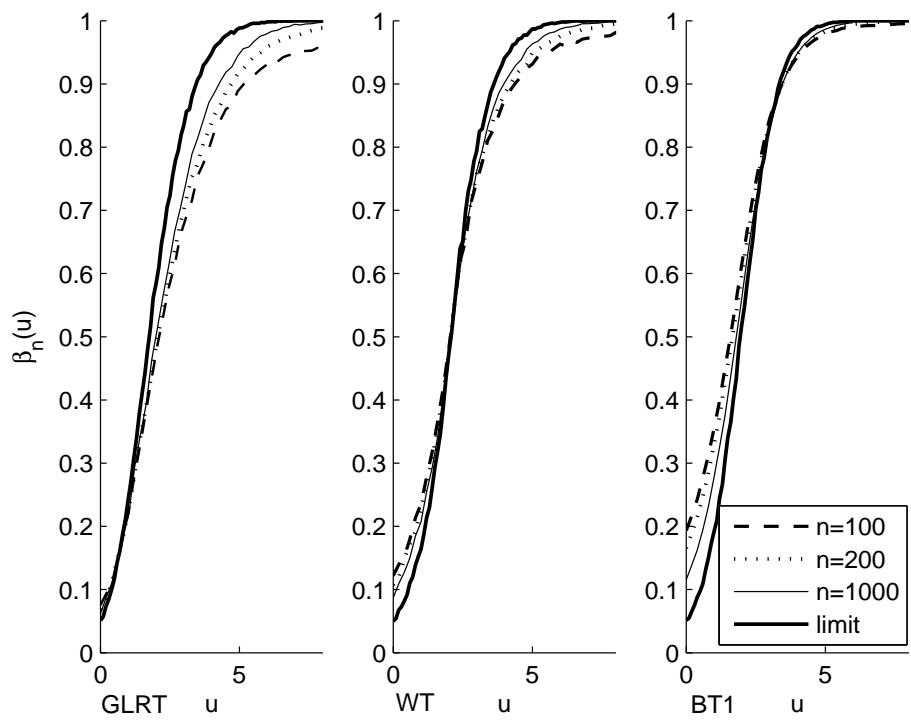}
\caption{\label{PF_cusp_1}Power functions of GLRT, WT and BT1 in cusp case
  with $\lambda\left(\vartheta,t\right)=2-\left|t-\vartheta\right|^{0.4}$}
\end{center}
\end{figure}

\begin{figure}[htb]
\begin{center}
\includegraphics[width=\textwidth]{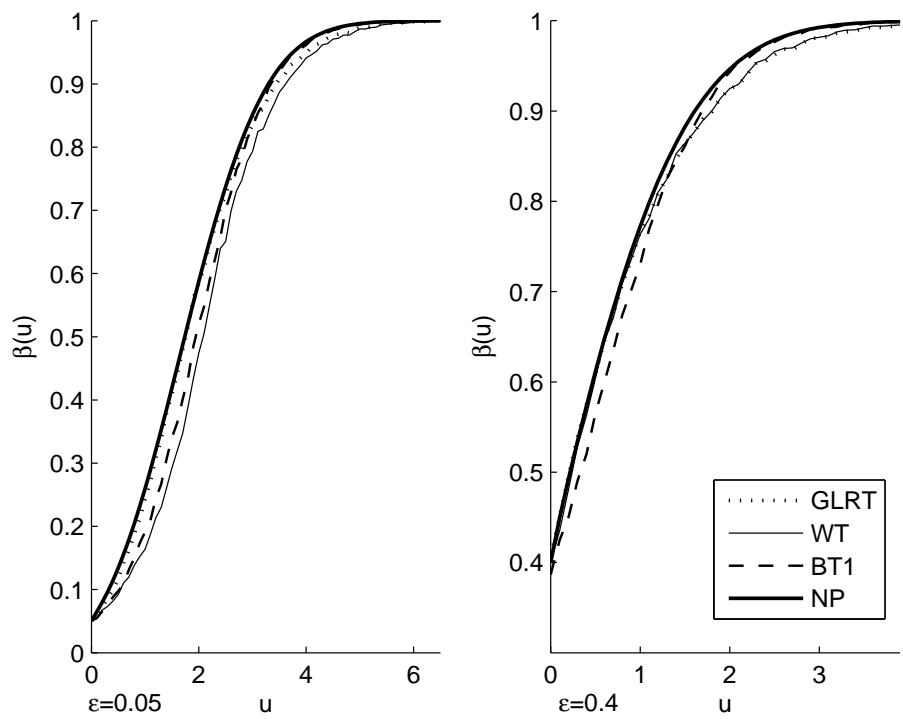}
\caption{\label{PF_cusp_comp}Comparison of limit power functions in cusp case
  with $\lambda\left(\vartheta,t\right)=2-\left|t-\vartheta\right|^{0.4}$}
\end{center}
\end{figure}

\begin{figure}[htb]
\begin{center}
\includegraphics[width=\textwidth]{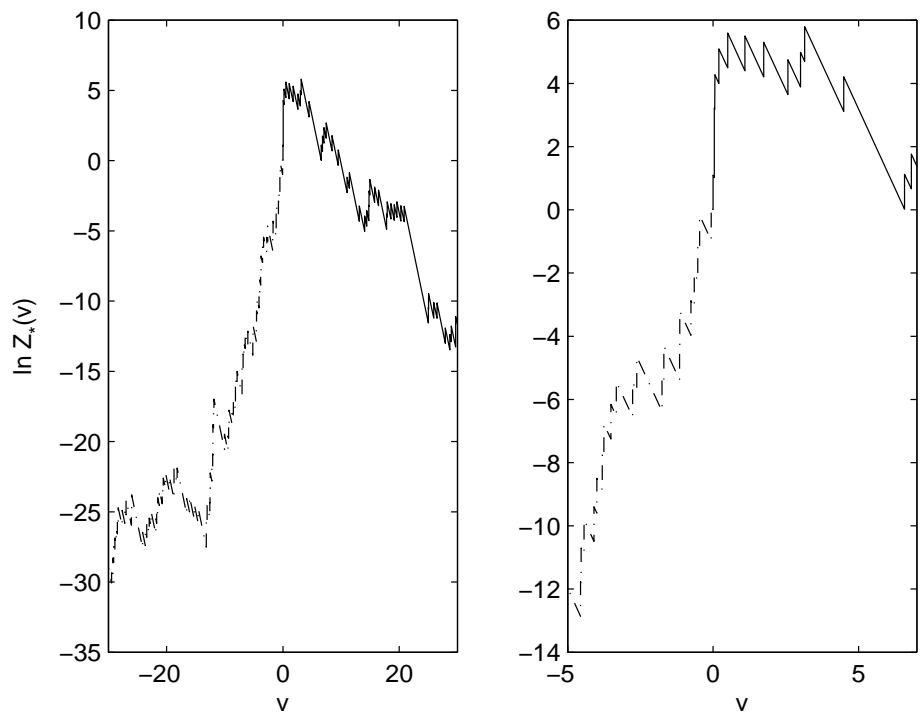}
\caption{\label{fct_t_dis}A realization of $\ln Z^\star(\cdot)$}
\end{center}
\end{figure}

\begin{figure}[htb]
\begin{center}
\includegraphics[width=\textwidth]{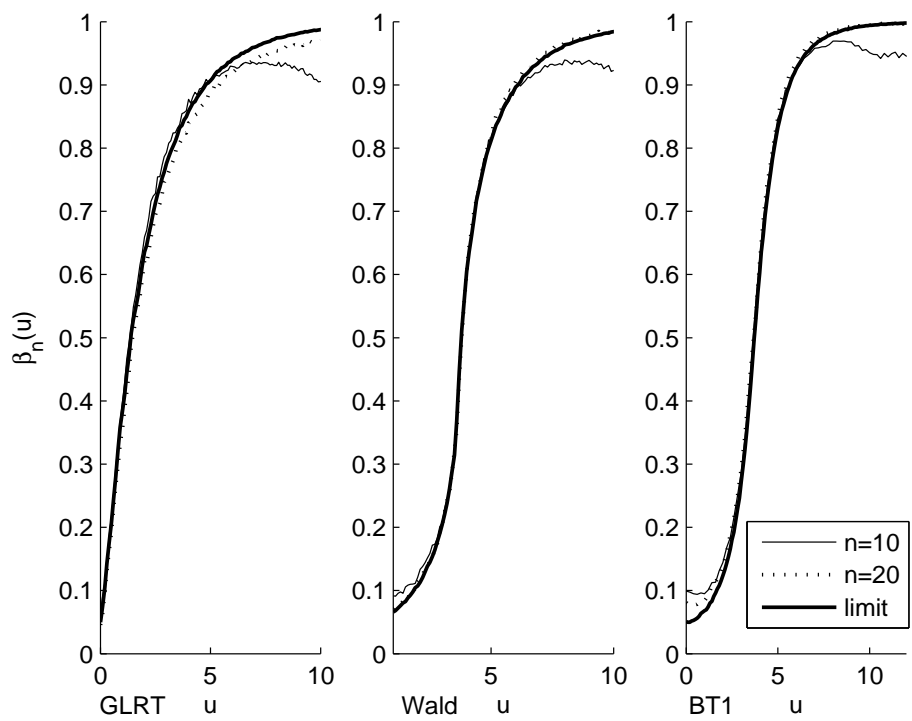}
\caption{\label{PF_dis}Power functions of GLRT, WT and BT1 in discontinuous
  case with $\rho=3$}
\end{center}
\end{figure}

\begin{figure}[htb]
\begin{center}
\includegraphics[width=\textwidth]{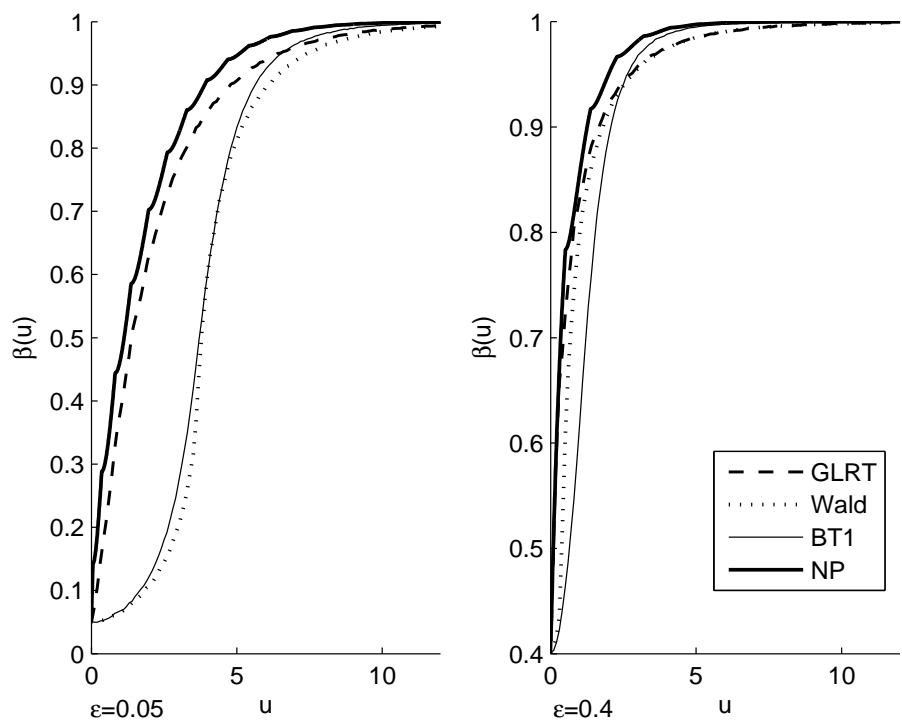}
\caption{\label{PF_dis_rho_3}Comparison of limit power functions in
  discontinuous case with $\rho=3$}
\end{center}
\end{figure}

\end{document}